\documentclass[12pt]{article}
\usepackage[utf8]{inputenc}
\usepackage[T1]{fontenc}
\usepackage[english]{babel}
\usepackage{parskip}
\usepackage{amsmath}
\usepackage{amssymb}
\usepackage{amsthm}
\usepackage{amsfonts}
\usepackage[hidelinks]{hyperref}
\usepackage{xcolor}
\usepackage{enumitem}
\usepackage{graphicx}
\usepackage{mathtools}
\usepackage{tikz}
\usetikzlibrary{arrows.meta,positioning,calc,backgrounds}
\usepackage{caption}
\usepackage{microtype}

\newtheorem{theorem}{Theorem}[section]
\newtheorem{lemma}[theorem]{Lemma}
\newtheorem{proposition}[theorem]{Proposition}
\newtheorem{corollary}[theorem]{Corollary}

\theoremstyle{definition}
\newtheorem{definition}[theorem]{Definition}

\newtheorem{example}[theorem]{Example}
\newtheorem{remark}[theorem]{Remark}
\theoremstyle{plain}

\tikzset{vertex/.style={circle,draw,fill=white,inner sep=2pt},
 terminal/.style={rectangle,draw,fill=gray!15,inner sep=3pt},
 point/.style={circle,fill,inner sep=1.5pt},
 bridge/.style={dashed,thick},
 transport/.style={-{Stealth[length=2mm]},thick}}

\newcommand{\GH}{d_{\mathrm{GH}}}
\newcommand{\Hd}{d_{\mathrm H}}
\newcommand{\dis}{\operatorname{dis}}
\newcommand{\len}{\operatorname{len}}
\newcommand{\diam}{\operatorname{diam}}
\newcommand{\Lip}{\operatorname{Lip}}
\newcommand{\ulim}{\lim_{\mathcal U}}
\newcommand{\R}{\mathbb{R}}
\newcommand{\N}{\mathbb{N}}

\newcommand{\cC}{\mathcal{C}}
\newcommand{\cM}{\mathcal{M}}
\newcommand{\cK}{\mathcal{K}}
\newcommand{\U}{\mathcal{U}}
\newcommand{\mycite}[2]{\cite[#1]{#2}}

\title{\vspace{-2cm}Steiner Minimal Trees in the Gromov--Hausdorff Space\\
and in Gromov--Hausdorff Clouds}
\author{Ivan Mikhailov}
\date{}

\begin{document}
\maketitle

\begin{abstract}
We prove the existence of Steiner minimal trees spanning a finite family of metric spaces in two cases of the Gromov--Hausdorff theory. In the compact Gromov--Hausdorff space $\cM$, every finite family of nonempty compact metric spaces is spanned by a shortest Steiner minimal tree whose Steiner vertices are again nonempty compact metric spaces. In a fixed Gromov--Hausdorff cloud $\cC$, every finite family of proper, isometrically cobounded spaces admits a finite-length Steiner minimal tree all of whose vertices remain in $\cC$. The compact case uses compact replacement through distance profiles and the Hausdorff hyperspace, and the cloud case uses pointed ultralimits.
\end{abstract}

\tableofcontents

\clearpage

%%%%%%%%%%%%%%%%%%%%%%%%%%%%%%%%%%%%%%%%%%
\section{Introduction}
\markright{\thesection.~Introduction}
%%%%%%%%%%%%%%%%%%%%%%%%%%%%%%%%%%%%%%%%%%

% The Hausdorff and Gromov--Hausdorff distances, their correspondence formula, diameter estimates, realization in a common ambient space, and the compactness of the Hausdorff hyperspace are standard; we use the conventions and results of \cite{BBI,TuzhilinLectures}. General background on Steiner minimal trees and extremal networks may be found in \cite{IvanovTuzhilinBook}.

The \emph{Steiner problem} asks for a shortest connected network spanning a prescribed finite set of terminals in a given metric space; its classical theory is treated in \cite{GilbertPollak,HRW,IvanovTuzhilinBook,IvanovTuzhilinNetworks}.

In the Gromov--Hausdorff space the problem was first studied by \cite{INT}: they proved that every finite family of \emph{finite} metric spaces is spanned by a Steiner minimal tree whose vertices are finite metric spaces, with a cardinality bound on those vertices (see~\cite[Theorem~2.1]{INT}); Further background is given in~\cite{ITMinimalFillings}. The present paper extends this existence statement beyond the finite setting, in two directions.

\begin{theorem}[Compact Steiner minimal trees, Theorem~\ref{main}]
For every finite nonempty family $(X_i)_{i\in I}$ of nonempty compact metric spaces, there is a finite tree-shaped network with these terminals and nonempty compact metric spaces at all Steiner vertices whose length is no greater than that of every finite connected network with the same terminals. For $|I|\ge2$ one may take at most $|I|-2$ Steiner vertices. Consequently, every finite family of points of $\cM$ admits a Steiner minimal tree inside~$\cM$.
\end{theorem}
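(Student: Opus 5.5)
Fix the terminals $X_1,\dots,X_n$ ($n=|I|$). The plan is to reduce to minimizing over finitely many combinatorial types, minimize each type by compactness, and take the best.

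\textbf{Reduction to full binary topologies.} In any metric space, a finite connected network spanning the terminals can be replaced by a tree of no greater length, since deleting cycle edges does not increase length. Steiner vertices of degree $\le 2$ can be suppressed by the triangle inequality, and vertices of degree $\ge4$ can be split using zero-length edges. So it suffices to minimize over the finitely many tree topologies $T$ with $n$ leaves labelled by terminals and at most $n-2$ interior Steiner vertices. For a fixed $T$ we minimize
\[
L_T(Y_1,\dots,Y_{n-2})=\sum_{\{u,v\}\in E(T)}\GH(Z_u,Z_v)
\]
over tuples of nonempty compact spaces, where $Z_u$ is the terminal or Steiner space at $u$. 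The overall minimum is then the minimum over $T$. For $n=1$ the single point $X_1$ works trivially.

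\textbf{Compactness of an almost-minimizing set.} Let $m_T=\inf L_T$. The task is to show the infimum is attained. Take a minimizing sequence $(Y^k_j)$. I will restrict to a uniformly totally bounded class, so that Gromov's precompactness criterion and the completeness of $\cM$ give a convergent subsequence; since $L_T$ is continuous (indeed $1$-Lipschitz in each coordinate), the limit is a minimizer.

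\textbf{Replacement step (main obstacle).} A priori the $Y_j^k$ need not be uniformly totally bounded. I would fix all Steiner spaces as compact subsets of one ambient space, following the hint in the abstract. First build a common compact ambient space $W$ containing isometric copies of all $X_i$. Then replace each Steiner space $Y$ by a compact subset of a fixed compact space, namely the Hausdorff hyperspace of some compact $K$, without increasing any $\GH$ distance to neighbours.

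The tool I would try is the ``distance profile'' embedding. Realize each pair distance via correspondences with distortion $<2\GH+\varepsilon$. Propagate from terminals through the tree: for each Steiner $Y$, map $y\mapsto$ the function $x\mapsto d(x,\,\text{corresponding point})$ into $C(W)$. These functions are $1$-Lipschitz and bounded by $\diam W$ plus the lengths along the tree, so they lie in a compact subset $K\subset C(W)$ by Arzel\`a--Ascoli. Replacing $Y$ by the closure of its profile image gives a compact space whose distances to neighbours are controlled by the same correspondences.

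The delicate point is that profile images distort internal distances. I would instead use a Kuratowski-type embedding into the sup-metric over a fixed finite net, or glue all spaces along the tree into one metric space via the correspondences, and then show each Steiner $Y$ can be replaced by a closed subset of bounded diameter and covering numbers bounded in terms of $\max_i$ covering numbers of $X_i$ and $m_T$. Concretely, I would truncate $Y$ to the points within distance $\le \GH$-budget of terminals' images, and then take finite nets. Proving these uniform covering bounds, i.e.\ that one may assume the Steiner vertices lie in a Gromov-precompact family, is the step I expect to be hardest.

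Once this is done, Hausdorff hyperspace compactness (Blaschke) yields limits, and continuity of $L_T$ finishes the proof. The bound of at most $|I|-2$ Steiner vertices comes from the reduction step.
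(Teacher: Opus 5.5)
\textbf{Verdict: genuine gap.} Your reduction to finitely many tree shapes with at most $|I|-2$ Steiner vertices is fine and matches the paper. The gap is exactly where you locate the difficulty: you never show that the Steiner spaces of a minimizing sequence can be replaced by members of a precompact family, and the fallback you sketch would not achieve this. Taking subsets or finite nets of $Y$ keeps the metric of $Y$ at scales below $2c_e$, and the terminals give no control there. A net made of partners of an $\eta$-net of the neighbouring spaces has cardinality growing as $\eta\to0$. So you only obtain networks of length at most $m_T+O(\eta)$, with no uniform covering bound and hence no limit.

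There is a second omission. You minimize $L_T$ only over compact tuples, but the statement compares against networks carrying arbitrary metric spaces at Steiner vertices, for example bounded but not totally bounded ones. You therefore also need that arbitrary Steiner spaces can be replaced by compact ones without increasing any edge length.

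The missing idea is the profile map you discarded. Its distortion of the internal distances of Steiner spaces is harmless, because Steiner spaces are free variables and only the edge lengths matter. The precise construction is Lemma~\ref{profile}:
\begin{enumerate}
\item Glue all vertex spaces, not only the terminals, along the tree into one space $W_\varepsilon$ with bridges of length $c_e=\GH(Y_a,Y_b)+\varepsilon$ (Corollary~\ref{treeglue}). This is needed because a Steiner vertex need not be adjacent to any terminal.
\item Send $z\in Y_a$ to $\bigl(x\mapsto d_{W_\varepsilon}(z,x)\bigr)_{i\in I}$. These functions are defined on the compact terminals only, so the map lands in the compact space $L_M=\prod_{i}\Lip_1(X_i,[0,M])$, not in $C(W)$ for a space containing the Steiner vertices.
\item The map is $1$-Lipschitz on $W_\varepsilon$. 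Hence bridge partners go to points at distance at most $c_e$, and the closures of the images of adjacent vertices are at Hausdorff distance at most $\GH(Y_a,Y_b)+\varepsilon$ in $L_M$.
\item On a terminal $X_i$ the map contains the Kuratowski coordinate $z\mapsto d_i(z,\cdot)$, so it is an isometry there.
\item Since $\GH\le\Hd$ inside the common space $L_M$, the profile images are admissible replacements.
\end{enumerate}
Blaschke's theorem in $\cK(L_M)$ (Lemma~\ref{hyperspace}) then yields convergent subsequences directly. The limiting terminals are isometric to $X_i$ by Lemma~\ref{zero}, and lengths pass to the limit by continuity of $\Hd$. No Gromov precompactness or covering estimates are needed. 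Applying the same construction to an arbitrary competitor also disposes of non-compact Steiner spaces.
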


\begin{theorem}[Steiner minimal trees in one cloud, Theorem~\ref{maincloud}]
Fix a Gromov--Hausdorff cloud~$\cC$. Every nonempty finite family of proper, isometrically cobounded spaces $X_i\in\cC$, $i\in I$, admits a finite-length Steiner minimal tree in $\cC$; that is, the tree is no longer than any connected competitor with the same terminals whose vertices all lie in $\cC$.
\end{theorem}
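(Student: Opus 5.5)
The plan follows the route suggested in the abstract: reduce to finitely many combinatorial types, take a minimizing sequence for each type, and extract a limit by pointed ultralimits. Fix the cloud $\cC$ and terminals $X_i\in\cC$, $i\in I$, each proper and isometrically cobounded.

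\emph{Step 1: restrict to trees with at most $|I|-2$ Steiner vertices.} Any connected competitor can be replaced by a spanning tree of no greater length. In that tree, Steiner vertices of degree $1$ can be deleted, and those of degree $2$ can be removed by shortcutting, using the triangle inequality for $\GH$ inside $\cC$. After this, every Steiner vertex has degree $\ge 3$, so there are at most $|I|-2$ of them. There are then finitely many combinatorial types $G$. It suffices to find, for each $G$, a minimizer of the length $\sum_{uv\in E(G)}\GH(x_u,x_v)$ over assignments of points of $\cC$ to the Steiner vertices, and then take the best type.

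\emph{Step 2: take a minimizing sequence and bound it.} The infimum $L_G$ is finite, since finite distances between members of one cloud are part of the definition. Take assignments $(x_v^n)$ with length tending to $L_G$. Every Steiner space $x_v^n$ is then within $\GH$-distance $L_G+1$ of some terminal $X_i$. Choose correspondences (or a common ambient realization) of distortion $<2(L_G+1)$ along the edges. Fix a base point $p_i\in X_i$ and transport it to base points $p_v^n$ in the Steiner spaces.

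\emph{Step 3: pass to a limit.} Fix a nonprincipal ultrafilter $\U$ and set $x_v=\ulim (x_v^n,p_v^n)$. For each edge $uv$, the correspondences $R_{uv}^n$ have distortion $\le \GH(x_u^n,x_v^n)+\varepsilon_n$. The ultralimit correspondence between $x_u$ and $x_v$ then has distortion $\le\ulim \GH(x_u^n,x_v^n)$. One must check that points of $x_u$ correspond to points of $x_v$ at bounded distance from the base points; this holds because distortions are uniformly bounded. Hence $\GH(x_u,x_v)\le \ulim \GH(x_u^n,x_v^n)$, and the total length is $\le L_G$.

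Two further checks are needed. First, for a terminal vertex, $\ulim (X_i,p_i)$ must be isometric to $X_i$. This is where properness and isometric coboundedness are used: a proper space is its own ultrapower at a fixed base point, and coboundedness lets us absorb any drift of the base points. Second, the ultralimits $x_v$ must lie in $\cC$. This is immediate once the first check holds, since $\GH(x_v,X_i)<\infty$ for some terminal $i$ and clouds are the classes of finite $\GH$-distance.

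\emph{Main obstacle.} I expect the hardest step to be the identification $\ulim (X_i,p_i^n)\cong X_i$. The base points $p_i^n$ are in fact fixed at $p_i$, since correspondences can always be chosen to respect them. Given that, properness gives that the pointed ultrapower of a proper space at a fixed point is the space itself. The real subtlety is that terminals must be matched consistently across the tree: the ultralimit construction has to use a single base point per space and one ultrafilter for all vertices simultaneously. I would first try rooting the tree at one terminal and propagating base points along edges, using coboundedness of the other terminals to move their base points back to $p_i$ by isometries. This costs nothing in distortion.
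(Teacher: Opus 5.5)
Your plan follows the paper's proof. It reduces to finitely many tree shapes (Lemma~\ref{reduction}), takes a minimizing sequence with near-optimal edge correspondences and base points transported from a root terminal, and passes to pointed ultralimits through which the correspondences descend (Lemma~\ref{pucorr}). It then identifies the terminal limits and gets membership in $\cC$ along tree paths.

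\textbf{The claim that needs repair.} You assert that the terminal base points ``are in fact fixed at $p_i$, since correspondences can always be chosen to respect them.'' This is false. At a non-root terminal the transported base point drifts, and forcing the pair $(p_v^n,p_i)$ into the correspondence can cost an unbounded amount of extra distortion.

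\textbf{What your last sentence actually gives.} Composing the correspondence on the edge entering $X_i$ with a surjective isometry from Definition~\ref{cobounded} keeps its distortion. It puts the base point within distance $C$ of the fixed point $x_0$, not at it. This weaker statement suffices: a uniformly bounded change of base points leaves $\operatorname{PUProd}$, and hence the pointed ultralimit, unchanged. Properness at the fixed base point $x_0$ then gives the isometry with $X_i$.

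\textbf{Comparison with the paper.} The paper does the same thing after the limit rather than before. It lets the terminal base points drift and applies the isometries $g_k$ inside Theorem~\ref{terminalident}.

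\textbf{A normalization slip.} A near-optimal correspondence has distortion at most $2\GH(x_u^n,x_v^n)+\varepsilon_n$, not $\GH(x_u^n,x_v^n)+\varepsilon_n$. The limit correspondence then has distortion at most $2\ulim\GH(x_u^n,x_v^n)$. This yields exactly the bound $\GH(x_u,x_v)\le\ulim\GH(x_u^n,x_v^n)$ that you state.
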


The compact case is proved by \emph{compact replacement}: correspondences realize the vertex spaces of a tree in one common metric space $W_\varepsilon$, the vector of distances to the compact terminals is a compact family $L_M$ of $1$-Lipschitz profiles, and the compact Hausdorff hyperspace replaces every Steiner space by a compact one without increasing any edge length. 

The cloud case is proved by \emph{pointed ultralimits}: coherently transported base points compactify the vertex spaces, and properness together with isometric coboundedness identifies each terminal ultrapower with the prescribed terminal. In both cases only finitely many tree shapes occur, since every connected network reduces to a tree with at most $|I|-2$ Steiner vertices of degree at least three.

The paper is organized as follows. Section~\ref{sec:defs} fixes notation, and Section~\ref{sec:standard} collects the standard Steiner-tree results. Section~\ref{sec:glue} proves the gluing lemma, Section~\ref{sec:compact} the compact-case theorem, and Section~\ref{sec:cloud} the cloud-case theorem. 

%%%%%%%%%%%%%%%%%%%%%%%%%%%%%%
\subsection*{Statement on the use of AI tools}
%%%%%%%%%%%%%%%%%%%%%%%%%%%%%%

AI tools were used in the preparation of this text, in particular for the literature search, for the \LaTeX{} typesetting, and for drafting and checking parts of the arguments. All mathematical statements were verified by the author, who is solely responsible for the content of the paper and for any remaining errors.

%%%%%%%%%%%%%%%%%%%%%%%%%%%%%%
\subsection*{Acknowledgements}
%%%%%%%%%%%%%%%%%%%%%%%%%%%%%%

The author is grateful to Prof.\ A.\,A.\,Tuzhilin and Prof.\ A.\,O.\,Ivanov for the constant attention to the work, and to the participants of the seminar ``Theory of extremal networks'' led by Prof.\ A.\,O.\,Ivanov and Prof.\ A.\,A.\,Tuzhilin at Lomonosov Moscow State University, and of the monthly online seminar in Gromov--Hausdorff distance geometry, for numerous fruitful discussions.

%%%%%%%%%%%%%%%%%%%%%%%%%%%%%%%%%%%%%%%%%%
\section{Main definitions and preliminary results}\label{sec:defs}
\markright{\thesection.~Main definitions and preliminary results}
%%%%%%%%%%%%%%%%%%%%%%%%%%%%%%%%%%%%%%%%%%

All metric spaces below are nonempty unless the contrary is stated. We fix notation for the standard material on metric spaces, correspondences and the Gromov--Hausdorff distance; the elementary properties are classical and are cited as they are used.

\subsection{Metric spaces}

\begin{definition}[Bounded, totally bounded, compact]
A metric space $(X,d)$ is \emph{bounded} if $\diam X=\sup_{x,x'\in X}d(x,x')<\infty$, and \emph{totally bounded} if for every $r>0$ finitely many open $r$-balls $B(x,r)=\{y:d(x,y)<r\}$ cover it. It is \emph{compact} if it is complete and totally bounded, equivalently if every sequence has a convergent subsequence. The closed ball of radius $r$ about $x$ is $\overline B(x,r)=\{y:d(x,y)\le r\}$.
\end{definition}

The equivalence of compactness with sequential compactness and the elementary properties of closed balls are standard; see~\cite[Chapter~1 and Section~1.6]{BBI} and~\cite{TuzhilinLectures}.

\subsection{Correspondences and the Gromov--Hausdorff distance}

\begin{definition}[Correspondence and distortion]
Let $(X,d_X)$ and $(Y,d_Y)$ be metric spaces. A \emph{correspondence} between them is a relation $R\subseteq X\times Y$ whose two coordinate projections are surjective. Its \emph{distortion} is
\[
 \dis R=\sup\Bigl\{|d_X(x,x')-d_Y(y,y')|:(x,y),(x',y')\in R\Bigr\}\in[0,\infty].
\]
The \emph{Gromov--Hausdorff distance} is
\[
 \GH(X,Y)=\tfrac12\inf_R\dis R,
\]
where $R$ ranges over all correspondences between $X$ and $Y$. For nonempty compact $X,Y$ this is the usual Gromov--Hausdorff distance.
\end{definition}

\begin{lemma}[Basic distance facts, \mycite{Chapter~7}{BBI}]\label{basic}
The Gromov--Hausdorff distance is symmetric and satisfies the triangle inequality. If $\GH(X,Y)<\infty$, then for each $\eta>0$ there is a correspondence with distortion less than $2\GH(X,Y)+\eta$. If $X,Y$ are bounded, then
\[
 |\diam X-\diam Y|\le 2\GH(X,Y).
\]
If $Y$ is bounded and $\GH(X,Y)<\infty$, then $X$ is bounded as well.
\end{lemma}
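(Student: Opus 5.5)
We verify the four assertions of Lemma~\ref{basic} directly from the definition of $\GH$ via correspondences; no ambient space is needed.

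\emph{Symmetry.} If $R\subseteq X\times Y$ is a correspondence, then its transpose $R^{T}=\{(y,x):(x,y)\in R\}$ is a correspondence between $Y$ and $X$. The expression defining the distortion is symmetric in the two coordinates, so $\dis R^{T}=\dis R$. Taking infima over all correspondences gives $\GH(X,Y)=\GH(Y,X)$.

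\emph{Triangle inequality.} Let $R_1\subseteq X\times Y$ and $R_2\subseteq Y\times Z$ be correspondences, and form the composition
\[
 R_2\circ R_1=\{(x,z):\exists\, y\ \text{with}\ (x,y)\in R_1,\ (y,z)\in R_2\}.
\]
Surjectivity of both projections of $R_1$ and $R_2$ makes $R_2\circ R_1$ a correspondence between $X$ and $Z$. Take $(x,z),(x',z')\in R_2\circ R_1$ with intermediate points $y,y'$. Then
\[
 |d_X(x,x')-d_Z(z,z')|\le|d_X(x,x')-d_Y(y,y')|+|d_Y(y,y')-d_Z(z,z')|\le\dis R_1+\dis R_2 .
\]
Hence $\dis(R_2\circ R_1)\le\dis R_1+\dis R_2$, with the convention $\infty$ throughout. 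Taking infima over $R_1$ and $R_2$ yields $\GH(X,Z)\le\GH(X,Y)+\GH(Y,Z)$.

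\emph{Almost optimal correspondence.} This is the definition of the infimum. If $\GH(X,Y)<\infty$, then $2\GH(X,Y)$ is a finite infimum of values $\dis R$. So for each $\eta>0$ some $R$ satisfies $\dis R<2\GH(X,Y)+\eta$.

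\emph{Diameter estimate and boundedness.} Both follow from one inequality. Fix any correspondence $R$ with $\dis R<\infty$. Given $x,x'\in X$, choose $y,y'$ with $(x,y),(x',y')\in R$. Then
\[
 d_X(x,x')\le d_Y(y,y')+\dis R\le\diam Y+\dis R .
\]
Taking the supremum over $x,x'$ gives $\diam X\le\diam Y+\dis R$.

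If $Y$ is bounded and $\GH(X,Y)<\infty$, such an $R$ exists by the previous step. Then $\diam X<\infty$, so $X$ is bounded.

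If both spaces are bounded, the same argument with the roles of $X$ and $Y$ exchanged gives $\diam Y\le\diam X+\dis R$. Therefore $|\diam X-\diam Y|\le\dis R$ for every $R$. If $\GH(X,Y)=\infty$ the claimed inequality is trivial. Otherwise, taking the infimum over $R$ gives $|\diam X-\diam Y|\le 2\GH(X,Y)$.

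The only point needing care is checking that the composition $R_2\circ R_1$ really is a correspondence and handling infinite distortions consistently; no step poses a genuine obstacle.
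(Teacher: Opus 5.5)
Your proof is correct in all four parts. The paper gives no proof of its own here; it simply cites \cite[Chapter~7]{BBI}. There the distance is defined through Hausdorff distances in ambient spaces, and the triangle inequality is proved by gluing two ambient metrics along the middle space. The correspondence formula only comes afterwards, as a separate theorem.

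You work entirely inside the correspondence definition that the paper actually adopts:
\begin{itemize}
\item transposition gives symmetry;
\item composition $R_2\circ R_1$ gives the triangle inequality;
\item the single estimate $\diam X\le\diam Y+\dis R$ gives both the diameter bound and the transfer of boundedness.
\end{itemize}
This makes the lemma self-contained. It does not depend on the equivalence of the two definitions (Lemma~\ref{ambient}, whose nontrivial direction is Lemma~\ref{glue}). It also visibly holds for arbitrary, possibly unbounded spaces with $\GH$ values in $[0,\infty]$, which is the generality the cloud section needs. The ambient route, in exchange, directly produces the embedding picture that the paper uses anyway in the compact case.

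One small remark: when $X$ and $Y$ are both bounded, the full product $X\times Y$ is a correspondence of finite distortion. So your case $\GH(X,Y)=\infty$ cannot occur there, though treating it does no harm.
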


For nonempty bounded subsets $A,B$ of a metric space $W$, the \emph{Hausdorff distance} is
\[
 \Hd^W(A,B)=\max\Bigl\{\sup_{a\in A}\inf_{b\in B}d_W(a,b),\ \sup_{b\in B}\inf_{a\in A}d_W(a,b)\Bigr\}.
\]

\begin{lemma}[Ambient and correspondence definitions, \mycite{Chapter~7}{BBI}]\label{ambient}
For nonempty metric subspaces $A,B\subseteq W$, one has $\GH(A,B)\le\Hd^W(A,B)$. Conversely, every correspondence $R\subseteq A\times B$ with $\dis R<\infty$ admits, for every $c>\dis R/2$, isometric embeddings of $A$ and $B$ in a common metric space at Hausdorff distance at most $c$. Thus the infimum of ambient Hausdorff distances equals the correspondence definition of $\GH$.
\end{lemma}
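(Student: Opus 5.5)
The first assertion, $\GH(A,B)\le\Hd^W(A,B)$, I would prove by building a correspondence from near-closest points. Assume $\Hd^W(A,B)<\infty$; otherwise there is nothing to show. Fix $r>\Hd^W(A,B)$ and let
\[
R=\{(a,b)\in A\times B: d_W(a,b)<r\}.
\]
By the definition of $\Hd^W$, every $a\in A$ has some $b\in B$ with $d_W(a,b)<r$, and symmetrically for every $b\in B$. So both projections of $R$ are surjective and $R$ is a correspondence. For $(a,b),(a',b')\in R$ the triangle inequality in $W$ gives
\[
|d_W(a,a')-d_W(b,b')|\le d_W(a,b)+d_W(a',b')<2r.
\]
Hence $\dis R\le 2r$ and $\GH(A,B)\le r$. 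Letting $r\downarrow\Hd^W(A,B)$ gives the inequality.

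For the converse I would use the standard gluing along a correspondence. Given $R\subseteq A\times B$ with $\dis R<\infty$ and $c>\dis R/2$, put $Z=A\sqcup B$. Keep $d_A$ on $A$ and $d_B$ on $B$, and for $a\in A$, $b\in B$ set
\[
d(a,b)=\inf_{(a',b')\in R}\bigl(d_A(a,a')+c+d_B(b',b)\bigr).
\]
Then $d(a,b)\ge c>0$ for $a\in A$, $b\in B$, so the new function separates points of $A$ from points of $B$. For $(a,b)\in R$ we get $d(a,b)\le c$, and surjectivity of both projections then yields $\Hd^Z(A,B)\le c$. The inclusions of $A$ and $B$ into $Z$ are isometric by construction, so the whole argument reduces to showing that $d$ is a metric.

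The main obstacle is the triangle inequality, in particular the mixed cases where a path passes through the other space. Two cases need checking.
\begin{enumerate}
\item[(i)] For $a,a_1\in A$ and $b\in B$, the inequality $d(a,b)\le d_A(a,a_1)+d(a_1,b)$ is immediate from the formula, since we may absorb $d_A(a,a_1)$ into the first term.
\item[(ii)] For $a_1,a_2\in A$ and $b\in B$ we need $d_A(a_1,a_2)\le d(a_1,b)+d(b,a_2)$. Choose near-optimal pairs $(a',b'),(a'',b'')\in R$ for the two infima. Then
\[
d_A(a_1,a_2)\le d_A(a_1,a')+d_A(a',a'')+d_A(a'',a_2)\le d_A(a_1,a')+d_B(b',b'')+\dis R+d_A(a'',a_2).
\]
Since $d_B(b',b'')\le d_B(b',b)+d_B(b,b'')$ and $\dis R\le 2c$, the right-hand side is at most the sum of the two near-optimal expressions. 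The slack in $c>\dis R/2$ absorbs the errors from choosing pairs that are only near-optimal.
\end{enumerate}
The cases with the roles of $A$ and $B$ swapped are symmetric.

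Combining the two directions gives the final statement. The first part shows that the infimum of ambient Hausdorff distances is at least $\GH(A,B)$. In the other direction, if $\GH(A,B)<\infty$, Lemma~\ref{basic} provides correspondences whose distortion is arbitrarily close to $2\GH(A,B)$. The gluing then produces ambient embeddings with Hausdorff distance arbitrarily close to $\GH(A,B)$. If $\GH(A,B)=\infty$, then every ambient Hausdorff distance is infinite by the first part, so the two quantities agree in all cases.
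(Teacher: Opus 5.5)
Your proof is correct. The paper does not prove the first inequality itself; it cites \cite{BBI}. Your nearest-point correspondence $R=\{(a,b):d_W(a,b)<r\}$ is the standard argument and is complete, including the case $\Hd^W(A,B)=\infty$.

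For the converse, the paper points to Lemma~\ref{glue}. There the metric on $W\sqcup B$ is an infimum over finite chains of moves inside each part and bridges of length $c$ along $R$. In that formulation the triangle inequality comes free from concatenation. The real work is showing that no original distance is shortened, which is done by collapsing each excursion through the other part using $2c+d_B(b,b')\ge d_A(a,a')$.

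You instead write down the explicit one-bridge formula $d(a,b)=\inf_{(a',b')\in R}\bigl(d_A(a,a')+c+d_B(b',b)\bigr)$. The restrictions to $A$ and $B$ are then the original metrics by definition, and the work moves into the mixed triangle inequality (ii), which uses exactly the same estimate. The two metrics coincide, since multi-bridge chains collapse to single-bridge ones by that estimate, so the difference is bookkeeping rather than idea. The chain version is stated for an arbitrary ambient $W\supseteq A$, as Corollary~\ref{treeglue} needs. Your formula extends to that setting verbatim by replacing $d_A(a,a')$ with $d_W(w,a')$.

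One small correction to your explanation. In (ii), the inequality you derive holds for every two pairs $(a',b'),(a'',b'')\in R$ using only $\dis R\le 2c$, so you can take the two infima directly. No near-optimal choice is needed, and no slack is needed to absorb errors. The strict inequality $c>\dis R/2$ matters only to ensure $c>0$, so that points of $A$ and $B$ stay at positive distance when $\dis R=0$. This matches the hypotheses $c>0$ and $\dis R\le 2c$ in Lemma~\ref{glue}.
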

The direction from a correspondence to a common ambient realization is supplied explicitly by Lemma~\ref{glue}.

\begin{lemma}[Compact Hausdorff hyperspace, \mycite{Sections~3--4}{TuzhilinLectures}]\label{hyperspace}
If $L$ is a nonempty compact metric space, the space $\cK(L)$ of nonempty compact subsets of $L$, equipped with the Hausdorff distance, is compact. Moreover,
\[
 \bigl|\Hd(A,B)-\Hd(A',B')\bigr|\le\Hd(A,A')+\Hd(B,B')
 \qquad(A,B,A',B'\in\cK(L)).
\]
\end{lemma}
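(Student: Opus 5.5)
The inequality follows from the triangle inequality for $\Hd$ on $\cK(L)$, and compactness will be proved as completeness plus total boundedness. First I would check that $\Hd$ is a genuine finite metric on $\cK(L)$. Finiteness holds because $L$ is bounded. Symmetry is built into the definition. Positivity holds because if $\Hd(A,B)=0$ then every $a\in A$ lies in $\overline B=B$ and vice versa. For the triangle inequality $\Hd(A,C)\le\Hd(A,B)+\Hd(B,C)$, take $a\in A$ and choose $b\in B$ with $d(a,b)\le\Hd(A,B)+\eta$. Then choose $c\in C$ with $d(b,c)\le\Hd(B,C)+\eta$, and let $\eta\to0$; the other half is symmetric. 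The displayed estimate is then immediate from two applications of the triangle inequality:
\[
\Hd(A,B)\le\Hd(A,A')+\Hd(A',B')+\Hd(B',B).
\]
Exchanging the roles of the primed and unprimed sets gives the other direction.

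For total boundedness, fix $\varepsilon>0$ and a finite $\varepsilon$-net $S\subseteq L$. For $A\in\cK(L)$ put $S_A=\{s\in S:\ \inf_{a\in A}d(s,a)<\varepsilon\}$. This set is nonempty, because every $a\in A$ has some $s\in S$ within $\varepsilon$. Every $a\in A$ is within $\varepsilon$ of a point of $S_A$, and every point of $S_A$ is within $\varepsilon$ of $A$. Hence $\Hd(A,S_A)\le\varepsilon$, so the finitely many nonempty subsets of $S$ form an $\varepsilon$-net of $\cK(L)$.

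For completeness, let $(A_n)$ be Cauchy. Passing to a subsequence, I may assume $\Hd(A_n,A_{n+1})<2^{-n}$; this suffices, since a Cauchy sequence with a convergent subsequence converges. Define $A$ as the set of limits of sequences $a_n\in A_n$ with $d(a_n,a_{n+1})<2^{-n}$. I expect this completeness step to be the only real obstacle. The following three points need to be verified.
\begin{enumerate}[label=(\roman*)]
\item Each point of $A_n$ starts such a chain, by choosing successive near points; the chain is Cauchy and converges because $L$ is complete. Hence $A\ne\emptyset$, and every point of $A_n$ lies within $2^{-n+1}$ of $A$.
\item Every point of $A$ is within $2^{-n+1}$ of $A_n$, namely of the $n$-th term of its defining chain.
\item $A$ is closed. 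Take a limit of points of $A$ and pick, by a diagonal choice, nearby chains in $A_n$; this shows the limit point is within $2^{-n+2}$ of $A_n$ for every $n$, which lets one build a chain converging to it. I would handle this more cleanly by replacing $A$ with its closure $\overline A$, which is compact since $L$ is compact, and noting that the bounds in (i) and (ii) persist for $\overline A$ with the same constants.
\end{enumerate}
Therefore $\Hd(A_n,\overline A)\le 2^{-n+1}\to0$, so $\cK(L)$ is complete and totally bounded, hence compact.
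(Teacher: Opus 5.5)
The paper does not prove this lemma; it quotes it from Tuzhilin's lectures (it is Blaschke's theorem). Your self-contained argument is correct and is the standard proof: the metric axioms, the two-sided estimate from two applications of the triangle inequality, total boundedness via the finitely many nonempty subsets of a finite net, and completeness via a fast Cauchy subsequence.

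The textbook version (e.g.\ in Burago--Burago--Ivanov) usually proves two more general facts: the hyperspace of closed bounded subsets of a complete space is complete, and that of a totally bounded space is totally bounded. There the limit is described as the upper limit $\bigcap_n\overline{\bigcup_{m\ge n}A_m}$. Your description of the limit through chains $a_n\in A_n$ with $d(a_n,a_{n+1})<2^{-n}$ does the same job. Your argument also uses only completeness of $L$ for completeness and only total boundedness of $L$ for total boundedness, so it yields the general statements as well. What your version buys is self-containedness, at the cost of a page the paper saves by citing.

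Two small points need tidying. In (i), a point of $A_n$ must also be extended \emph{backwards}, using the other half of the bound $\Hd(A_m,A_{m+1})<2^{-m}$. Only then does it lie on a chain indexed from the first term, as your definition of $A$ requires, so that its limit belongs to $A$. In (iii), drop the direct closedness sketch. From $\inf_{y\in A_n}d(x,y)\le 2^{-n+2}$ alone you cannot build a chain through $x$ with steps $<2^{-n}$; the consecutive steps come out of size about $3\cdot 2^{-n}$ or larger. Keep only the passage to $\overline A$, which is all you need: the bounds in (i) and (ii) survive taking the closure, and $\overline A$ is nonempty and compact.
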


\begin{lemma}[Zero distance for compact spaces, \mycite{Chapter~7}{BBI}]\label{zero}
Nonempty compact metric spaces $A,B$ with $\GH(A,B)=0$ are isometric.
\end{lemma}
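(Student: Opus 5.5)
The statement is the classical fact that nonempty compact spaces at Gromov--Hausdorff distance zero are isometric. I plan to build almost-isometries $A\to B$ from correspondences of vanishing distortion and extract a genuine isometry by a diagonal argument on a countable dense subset.

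First, by Lemma~\ref{basic}, since $\GH(A,B)=0<\infty$, for each $n$ there is a correspondence $R_n$ with $\dis R_n<1/n$. Choosing for every $a\in A$ some $f_n(a)\in B$ with $(a,f_n(a))\in R_n$ gives maps $f_n\colon A\to B$ with
\[
 |d_B(f_n(a),f_n(a'))-d_A(a,a')|<1/n \qquad (a,a'\in A).
\]

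Next, fix a countable dense set $S=\{s_1,s_2,\dots\}\subseteq A$; one exists because $A$ is compact, hence totally bounded and separable. Since $B$ is sequentially compact, a Cantor diagonal argument yields a subsequence $n_k$ such that $f_{n_k}(s)$ converges in $B$ for every $s\in S$. Define $f(s)=\lim_k f_{n_k}(s)$. Passing to the limit in the distortion bound gives $d_B(f(s),f(s'))=d_A(s,s')$, so $f$ is an isometric embedding of $S$. Because $B$ is complete, $f$ extends uniquely by uniform continuity to an isometric embedding $f\colon A\to B$.

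The remaining step, which I expect to be the main obstacle, is surjectivity, since the extension only produces an isometric embedding. One option is to run the same construction with $R_n^{-1}$ to get an isometric embedding $g\colon B\to A$. Then $g\circ f\colon A\to A$ is an isometric self-embedding of a compact space, and such a map is surjective by the standard argument: if some $x\notin g f(A)$, set $\varepsilon=\operatorname{dist}(x,gf(A))>0$; the orbit $x,(gf)(x),(gf)^2(x),\dots$ is then $\varepsilon$-separated, which contradicts total boundedness. Hence $g$ is surjective, and by symmetry $f\circ g$ is surjective too, so $f$ is surjective. A more direct route: for $b\in B$ pick $a_k$ with $(a_k,b)\in R_{n_k}$, pass to a convergent subsequence $a_k\to a$, and use density of $S$ and the distortion bound to show $f(a)=b$. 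I would use the self-embedding argument, since it avoids bookkeeping with the subsequences.
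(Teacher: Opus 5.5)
Your proof is correct and follows the standard argument in the source the paper cites (BBI, Theorem 7.3.30). That argument builds maps of vanishing distortion, takes a diagonal subsequence on a countable dense set, extends to isometric embeddings in both directions, and gets surjectivity from the fact that an isometric self-embedding of a compact space is onto; the paper itself supplies no proof beyond this citation.
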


%%%%%%%%%%%%%%%%%%%%%%%%%%%%%%%%%%%%%%%%%%
\section{Steiner trees: standard results}\label{sec:standard}
\markright{\thesection.~Steiner trees: standard results}
%%%%%%%%%%%%%%%%%%%%%%%%%%%%%%%%%%%%%%%%%%

This section gathers the standard Steiner-tree material that the paper quotes; the results are classical and are cited rather than reproved.

\begin{definition}[Network, length, Steiner vertex]\label{networkdef}
Fix a finite index set $I$ and terminal spaces $(X_i,d_i)$, $i\in I$. A \emph{network} consists of a finite simple connected graph with vertex set $I\sqcup V$, where $V$ is a finite set of additional, or \emph{Steiner}, vertices, together with a metric space $Y_a$ at every vertex $a\in I\sqcup V$, where $Y_i=X_i$ for $i\in I$. Its \emph{length} is
\[
 \len N=\sum_{\{a,b\}\in E(N)}\GH(Y_a,Y_b)\in[0,\infty].
\]
Different vertices may carry isometric spaces. A \emph{tree} is a connected graph with no cycle, equivalently a connected graph with a unique simple path between any two vertices. A \emph{tree network} is a network whose graph is a tree. A \emph{Steiner minimal tree} for $(X_i)_{i\in I}$ is a tree network $N_*$ with these terminals and
\[
 \len N_*\le\len N
\]
for every connected network $N$ with the same terminals.
\end{definition}

The degree of a vertex is the number of its neighbours; a \emph{leaf} has degree one. A path is \emph{simple} if it has no repeated vertices, and a \emph{cycle} is a closed path with no repetitions except at its starting and ending vertex.

The first standard result is the combinatorial reduction of connected networks to trees with few Steiner vertices. It is independent of the metric spaces placed at the vertices and holds for arbitrary nonnegative edge weights.

\begin{lemma}[Reduction to tree shapes, \mycite{Chapter~1}{HRW}, \mycite{Chapter~1}{IvanovTuzhilinBook}]\label{reduction}
Let $n=|I|\ge2$. Every finite connected network with terminals $(X_i)_{i\in I}$ can be replaced, without increasing its length, by a tree network with the same terminals and at most $n-2$ Steiner vertices. In particular, only finitely many labelled tree shapes need be considered when searching for a Steiner minimal tree.
\end{lemma}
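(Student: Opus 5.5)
We would prove Lemma~\ref{reduction} by a purely combinatorial argument on the weighted graph, using only that the edge weights $w(\{a,b\})=\GH(Y_a,Y_b)\in[0,\infty]$ are nonnegative. Weights may be infinite; we take $\len N$ to be $+\infty$ in that case, and every operation below stays valid. We never change any vertex space. We only delete edges, delete Steiner vertices, or replace a Steiner vertex of degree two by a direct edge, and each step is justified by the triangle inequality for $\GH$ from Lemma~\ref{basic}.

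The procedure runs in the following order.

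\textbf{Removing cycles.} While the graph contains a cycle, delete one edge of that cycle. This preserves connectedness and does not increase the length, since weights are nonnegative. After finitely many steps the graph is a tree.

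\textbf{Removing Steiner leaves.} While some Steiner vertex is a leaf, delete it together with its edge. This preserves the tree structure, keeps all terminals, and does not increase the length. It also does not create cycles.

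\textbf{Suppressing Steiner vertices of degree two.} Let $v$ be a Steiner vertex with exactly two neighbours $a,b$. Replace the edges $\{a,v\}$ and $\{v,b\}$ by the single edge $\{a,b\}$. Since the graph is a tree and $a\ne b$, the edge $\{a,b\}$ was not previously present, so the graph stays simple and remains a tree. By the triangle inequality, $\GH(Y_a,Y_b)\le\GH(Y_a,Y_v)+\GH(Y_v,Y_b)$, so the length does not increase.

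Each operation strictly decreases the number of vertices plus edges, so the process terminates. At the end we have a tree $T$ in which every Steiner vertex has degree at least three.

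\textbf{Counting Steiner vertices.} Let $s$ be the number of Steiner vertices and $n=|I|$. A tree with $n+s$ vertices has $n+s-1$ edges. Every terminal has degree at least one, since $n\ge2$ and the tree is connected. The handshake lemma then gives
\[
 2(n+s-1)=\sum\deg\ \ge\ n+3s,
\]
hence $s\le n-2$.

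\textbf{Finiteness of tree shapes.} A labelled tree on the vertex set $I\sqcup\{1,\dots,s\}$ with $s\le n-2$ is one of finitely many graphs on at most $2n-2$ vertices. This gives the final assertion.

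The only delicate point is the suppression step. One must check that it keeps the graph simple and acyclic, which follows from uniqueness of paths in a tree, and that infinite weights cause no trouble, which holds because the triangle inequality is valid in $[0,\infty]$. Everything else is routine bookkeeping.
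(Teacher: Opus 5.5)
Your proposal is correct and follows essentially the same route as the paper's proof. You pass to a spanning tree (your one-edge-at-a-time cycle deletion), delete nonterminal leaves, suppress nonterminal degree-two vertices via the triangle inequality for $\GH$, and conclude $s\le n-2$ from the handshake count $n+3s\le 2(n+s-1)$; your extra checks on simplicity after suppression, termination, and infinite weights are details the paper leaves implicit, and they are sound.
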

\begin{proof}
For completeness we indicate the argument: take a spanning tree and discard the remaining edges, which does not increase a sum of nonnegative weights; delete every nonterminal leaf; suppress every nonterminal vertex of degree two, replacing the path $a,v,b$ by the edge $\{a,b\}$, whose weight is at most $\GH(Y_a,Y_v)+\GH(Y_v,Y_b)$ by the triangle inequality (Lemma~\ref{basic}). The operations terminate and leave a tree in which every leaf is a terminal and every Steiner vertex has degree at least three. If the reduced tree has $n$ terminal leaves and $s$ Steiner vertices, the handshake identity and $|E|=|V|-1$ give
\[
 n+3s\le\sum_v\deg v=2(n+s-1),
\]
hence $s\le n-2$.
\end{proof}

\begin{figure}[htbp]
\centering
\begin{tikzpicture}[font=\small]
 \node[anchor=west] at (-.5,2.1) {(a) Delete a nonterminal leaf};
 \node[vertex,label=below:{$u$}] (u) at (0,1.2) {};
 \node[vertex,label=below:{$v\notin I$}] (v) at (2,1.2) {};
 \draw (u)--node[above] {$\GH(Y_u,Y_v)$} (v);
 \draw (u)--(-.5,1.6); \draw (u)--(-.5,.8);
 \draw[transport] (3,1.2)--node[above] {delete $v$} (4.5,1.2);
 \node[vertex,label=below:{$u$}] (ur) at (6,1.2) {};
 \draw (ur)--(5.5,1.6); \draw (ur)--(5.5,.8);
 \node[anchor=west] at (-.5,-.1) {(b) Suppress a nonterminal degree-two vertex};
 \node[vertex,label=below:{$a$}] (a) at (0,-1.1) {};
 \node[vertex,label=below:{$v\notin I$}] (mid) at (2,-1.1) {};
 \node[vertex,label=below:{$b$}] (b) at (4,-1.1) {};
 \draw (a)--node[above,font=\scriptsize] {$\GH(Y_a,Y_v)$} (mid);
 \draw (mid)--node[above,font=\scriptsize] {$\GH(Y_v,Y_b)$} (b);
 \draw[transport] (4.6,-1.1)--(5.5,-1.1);
 \node[vertex,label=below:{$a$}] (ar) at (6,-1.1) {};
 \node[vertex,label=below:{$b$}] (br) at (8.2,-1.1) {};
 \draw[thick] (ar)--node[above,font=\scriptsize] {$\GH(Y_a,Y_b)$} (br);
\end{tikzpicture}
\caption{Local reductions of a competitor tree (Lemma~\ref{reduction}).}
\label{fig:reduction}
\end{figure}

Because of Lemma~\ref{reduction}, the existence proofs below need only handle tree networks with a bounded number of Steiner vertices; the reduction of an arbitrary competitor is then routed through it.

The second standard result is existence: in a finite, or more generally compact, metric space shortest networks exist, and in the Gromov--Hausdorff space the finite-metric-space case is due to Ivanov, Nikolaeva and Tuzhilin.

\begin{theorem}[Existence in the classical setting, \mycite{Chapter~1}{HRW}, \mycite{Chapter~1}{IvanovTuzhilinNetworks}]\label{classicalexist}
Let $X$ be a finite metric space, or more generally a compact metric space, and let $A\subseteq X$ be a finite nonempty set of terminals. Then there is a shortest connected network in $X$ spanning $A$, and it may be taken to be a tree in which every Steiner vertex has degree at least three.
\end{theorem}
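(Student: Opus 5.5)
The plan is the usual compactness argument: reduce to finitely many tree shapes, show the length is continuous in the Steiner points, and restrict those points to a compact region without losing anything.

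\emph{Step 1 (combinatorial reduction).} Fix a connected network $N$ in $X$ spanning $A$, with edge weights given by the metric of $X$. If $|A|=1$ the single vertex is a shortest network and there is nothing to prove, so assume $n=|A|\ge2$. Lemma~\ref{reduction} holds for arbitrary nonnegative weights satisfying the triangle inequality. It lets us replace $N$, without increasing its length, by a tree with the terminals $A$ and at most $n-2$ Steiner vertices, each of degree at least three. Hence only finitely many labelled tree shapes $T$ need to be considered.

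\emph{Step 2 (continuity).} For each shape $T$ with Steiner set $V_T$, define
\[
 \ell_T(p)=\sum_{\{a,b\}\in E(T)} d_X(p_a,p_b), \qquad p\in X^{V_T},
\]
where terminals are fixed at their points of $A$. Each summand is $1$-Lipschitz in each coordinate, so $\ell_T$ is continuous on $X^{V_T}$. If $X$ is finite, this space is finite; if $X$ is compact, it is compact by Tychonoff, since there are only finitely many factors. In either case $\ell_T$ attains its minimum at some $p^T$.

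\emph{Step 3 (choosing the minimizer).} Let $T_*$ be a shape minimizing $\ell_T(p^T)$ over the finitely many shapes, and let $N_*$ be the realized tree. For any competitor $N$, Step~1 produces a shape $T$ and a placement $p$ with
\[
 \len N\ge \ell_T(p)\ge \ell_T(p^T)\ge \len N_*.
\]
Finally, apply the reductions of Lemma~\ref{reduction} to $N_*$ itself: deleting nonterminal leaves and suppressing degree-two Steiner vertices does not increase length. The result is still minimal, and every Steiner vertex now has degree at least three.

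\emph{Main obstacle.} Everything here is routine. The only care needed is with coincident Steiner points or coincident terminal and Steiner points, where the graph must remain simple. We handle this by allowing equal points at distinct vertices, as the definition of a network permits. In the compact case, the point where compactness is used is the attainment of the minimum in Step~2.
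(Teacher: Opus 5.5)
Your proposal is correct and follows the paper's own argument. You reduce every competitor via Lemma~\ref{reduction} to one of finitely many tree shapes, minimize the continuous length functional over the compact product $X^{V}$ for each shape, and take the least of the finitely many minima; your remarks on coincident points and on applying the reductions to $N_*$ to secure degree at least three just spell out what the paper's two-line proof leaves implicit.
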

\begin{proof}
The compact case follows because there are finitely many tree topologies and, for a fixed topology, the length is continuous on the compact product $X^{V}$. The degree reduction is Lemma~\ref{reduction}.
\end{proof}

\begin{theorem}[Finite metric spaces in the Gromov--Hausdorff space, \mycite{Theorem~2.1}{INT}]\label{INTthm}
Every finite family of finite metric spaces admits a Steiner minimal tree all of whose vertices are finite metric spaces, with a cardinality bound on those vertex spaces.
\end{theorem}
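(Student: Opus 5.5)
This theorem is quoted from \cite[Theorem~2.1]{INT}, so in the paper it is cited rather than reproved. If I had to supply a proof, I would reduce it to the classical existence result, Theorem~\ref{classicalexist}, in a suitable finite ambient space. The plan has four steps.

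\textbf{Step 1 (finitely many tree shapes).} Let $n=|I|$ and $N=\sum_i|X_i|$. By Lemma~\ref{reduction} it suffices to compete against tree networks with at most $n-2$ Steiner vertices. Fix one labelled tree shape $T$ with vertex set $I\sqcup V$.

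\textbf{Step 2 (replace Steiner spaces by bounded finite ones).} Take any tree network on $T$ with arbitrary Steiner spaces $Y_v$ of finite length, and fix $\varepsilon>0$. Choose a correspondence $R_e$ on each edge $e$ with $\dis R_e<2\GH+\varepsilon$, using Lemma~\ref{basic}. Glue the vertex spaces along the edges of the tree into one common metric space $W$, realizing each edge at Hausdorff distance below $\GH+\varepsilon$ (the gluing construction of Lemma~\ref{ambient}). Because $T$ is a tree, these amalgamations are consistent.

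Now pull back. Every terminal point $x\in X_i$ has a chain of corresponded points along each path of the tree. Let $Z_v\subseteq Y_v$ consist of the points of $Y_v$ reached from some terminal point by following correspondences along tree paths. Each terminal point should need only one chosen partner per edge. Then
\[
|Z_v|\le N,
\]
and restricting the correspondences does not increase distortion. The danger is that the restricted relations are no longer surjective onto $Z_v$ on edges between two Steiner vertices. I would fix this by choosing the partners through a single propagation from all terminal points simultaneously, then adding to each $Z_v$ partners of points already chosen on its neighbours. Done in a fixed order over the tree, this keeps the size bounded by a function of $N$ and $n$.

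\textbf{Step 3 (compactness).} After Step~2 the search is over tree networks whose Steiner vertices are finite spaces of cardinality at most $K(N,n)$. Their distances are bounded by $\max_i\diam X_i$ plus the length, which we may assume is bounded. The space of such metrics on a fixed set of $K$ points, with pseudometrics allowed, is a compact subset of $\R^{K^2}$. On this set each edge length $\GH(Y_a,Y_b)$ is continuous, in fact $1$-Lipschitz up to a constant in the sup norm, since all these spaces are finite.

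By compactness, the length restricted to shape $T$ attains its minimum. Quotienting pseudometrics to metrics does not change Gromov--Hausdorff distances. Minimizing over the finitely many shapes then gives the Steiner minimal tree.

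\textbf{Main obstacle.} The hard part is Step~2: keeping the restricted relations genuine correspondences on Steiner--Steiner edges while keeping the cardinality bounded, and ensuring the distortion does not grow. I would first try the terminal-driven selection, adding one partner per point per edge along a breadth-first order of the tree from the terminals. Then I would check that each $Z_v$ stays finite with a bound exponential in the tree depth, which is at most $n-2$.
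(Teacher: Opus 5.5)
The paper does not prove this statement; it only quotes it from \cite{INT}, so there is no in-paper argument to match. Judged on its own, your outline is a correct proof, and the obstacle you flag in Step~2 disappears once the selection is made one \emph{thread} at a time.

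\textbf{Threads.} Fix a terminal $i$ and a point $x\in X_i$, and root $T$ at $i$. Going from each vertex $a$ to each child $b$, pick one partner, under $R_{\{a,b\}}$, of the point already chosen at $a$. Every non-root vertex has a unique parent, so this gives a tuple $t^x=(t^x_v)_{v\in I\sqcup V}$ with $t^x_i=x$ and $(t^x_a,t^x_b)\in R_{\{a,b\}}$ for \emph{every} edge. Put $Z_v=\{t^x_v: x\in X_j,\ j\in I\}$. Then:
\begin{enumerate}[label=(\roman*)]
\item $Z_j=X_j$ at each terminal, and $|Z_v|\le N$ at each Steiner vertex;
\item on each edge, the relation $\{(t^x_a,t^x_b)\}$ is itself a correspondence between $Z_a$ and $Z_b$ contained in $R_{\{a,b\}}$;
\item hence $\GH(Z_a,Z_b)\le\frac12\dis R_{\{a,b\}}<\GH(Y_a,Y_b)+\varepsilon/2$.
\end{enumerate}
Surjectivity on Steiner--Steiner edges is therefore automatic, and no closure step is needed. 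The bound is $N$, not exponential in the depth. The trouble you anticipated only arises if the chains from one terminal point toward different vertices are chosen independently.

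\textbf{Minor points.} The ambient space $W$ is never used and can be dropped. In Step~3 the diameter bound should be $\max_i\diam X_i+2C$, from $|\diam X-\diam Y|\le 2\GH(X,Y)$; this is harmless. With it, the sublevel set of the length among $N$-point pseudometrics is compact and the length is continuous on it. Step~2 then shows that the minimum there equals the infimum over arbitrary Steiner spaces for that shape.

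\textbf{Comparison with the paper.} For compact terminals, Theorem~\ref{main} uses profile maps into the compact space $L_M$ and Hausdorff limits in $\cK(L_M)$. That handles infinite compact terminals, but yields only compact Steiner spaces with no cardinality information. Your route trades generality for control: thread restriction plus compactness of pseudometrics on a fixed finite set gives finite Steiner spaces of size at most $N=\sum_i|X_i|$, but it needs $N<\infty$. The two routes also combine. Apply your Step~2 to the compact minimizer of Theorem~\ref{main}, using optimal correspondences on its edges (these exist between compact spaces). This gives finite Steiner spaces of cardinality at most $N$ without lengthening any edge, so once Theorem~\ref{main} is available your Step~3 is not even needed.
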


The counting of labelled tree shapes used later is equally standard: on a fixed labelled vertex set there are only finitely many simple graphs, since a graph is a subset of the finitely many unordered pairs of vertices. With at most $n-2$ Steiner vertices available (Lemma~\ref{reduction}), only finitely many tree shapes occur, and each admits finitely many realizations.

%%%%%%%%%%%%%%%%%%%%%%%%%%%%%%%%%%%%%%%%%%
\section{Gluing metric spaces along a tree}\label{sec:glue}
\markright{\thesection.~Gluing metric spaces along a tree}
%%%%%%%%%%%%%%%%%%%%%%%%%%%%%%%%%%%%%%%%%%

To compare the vertex spaces of a network we realize them inside one common metric space, matching the spaces along the edges through the chosen correspondences. The following gluing lemma is the technical core of the compact-case proof.

\begin{lemma}[Gluing one new space]\label{glue}
Let $(W,d_W)$ be a metric space containing an isometric copy of a metric space $(A,d_A)$. Let $(B,d_B)$ be metric, let $R\subseteq A\times B$ be a correspondence, and let $c>0$ satisfy $\dis R\le2c$. Then there is a metric on the disjoint union $W\sqcup B$ restricting to the given metrics and satisfying
\begin{equation}\label{eq:bridge}
 d(a,b)\le c\qquad\bigl((a,b)\in R\bigr).
\end{equation}
Consequently, the Hausdorff distance between $A$ and $B$ in this union is at most $c$.
\end{lemma}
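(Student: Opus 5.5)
We define the cross distances explicitly and then check the triangle inequality. For $w\in W$ and $b\in B$, set
\[
 d(w,b)=d(b,w)=\inf_{(a',b')\in R}\bigl(d_W(w,a')+c+d_B(b',b)\bigr),
\]
where $A$ is identified with its isometric copy in $W$. On $W$ and on $B$ we keep $d_W$ and $d_B$. This is the standard construction used to realize a correspondence in a common ambient space.

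The value is finite because $R$ is nonempty. It is positive, since it is at least $c>0$, so distinct points of $W\sqcup B$ are never at distance zero. For $(a,b)\in R$, choosing $(a',b')=(a,b)$ gives $d(a,b)\le c$, which is \eqref{eq:bridge}.

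The main work is the triangle inequality. Symmetry reduces it to triples of points not all in one part.

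\textbf{Type $w,w'\in W$, $b\in B$.} The bound $d(w,b)\le d_W(w,w')+d(w',b)$ follows from the triangle inequality in $W$ inside the infimum. The analogous inequality with two points of $B$ follows from the triangle inequality in $B$.

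\textbf{Through a point of $B$.} We need $d_W(w,w')\le d(w,b)+d(b,w')$. Take near-optimal pairs $(a_1,b_1)$ and $(a_2,b_2)$ in $R$. Then
\[
 d_W(w,w')\le d_W(w,a_1)+d_A(a_1,a_2)+d_W(a_2,w'),
\]
and
\[
 d_A(a_1,a_2)\le d_B(b_1,b_2)+\dis R\le d_B(b_1,b)+d_B(b,b_2)+2c.
\]
The right-hand side is at most $d(w,b)+d(b,w')$ up to an arbitrarily small error.

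\textbf{Through a point of $W$.} Symmetrically, we need $d_B(b,b')\le d(b,w)+d(w,b')$. Take near-optimal pairs $(a_1,b_1)$ and $(a_2,b_2)$. Then
\[
 d_B(b_1,b_2)\le d_A(a_1,a_2)+2c\le d_W(a_1,w)+d_W(w,a_2)+2c.
\]
This is the step where the hypothesis $\dis R\le 2c$ is used exactly. It is the only point of the argument that needs care.

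Finally, the Hausdorff bound. Every $a\in A$ has some $b$ with $(a,b)\in R$, and every $b\in B$ has some $a$ with $(a,b)\in R$, because both projections of $R$ are surjective. By \eqref{eq:bridge}, each point of $A$ lies within $c$ of $B$ and each point of $B$ lies within $c$ of $A$. Hence $\Hd(A,B)\le c$ in $W\sqcup B$.
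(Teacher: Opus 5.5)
Your proof is correct, but it divides the work differently from the paper.

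The paper defines the distance on $W\sqcup B$ as the infimum of costs of finite chains of moves. A chain may move inside $W$, move inside $B$, or cross a bridge of length $c$ joining a related pair. With this definition, symmetry and the triangle inequality follow at once by concatenating chains. The real work is showing that the original metrics are not shortened. The paper does this by eliminating excursions, using $2c+d_B(b,b')\ge d_A(a,a')$ and the mirror inequality $2c+d_A(a,a')\ge d_B(b,b')$.

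You instead write down the one-bridge formula $d(w,b)=\inf_{(a',b')\in R}\bigl(d_W(w,a')+c+d_B(b',b)\bigr)$. The restrictions to $W$ and $B$ are then correct by definition, and you pay for this with a case check of the triangle inequality. Your cases ``through a point of $B$'' and ``through a point of $W$'' are exactly the paper's two excursion eliminations, so the same two distortion inequalities drive both arguments. In fact the two constructions give the same metric, since eliminating excursions reduces any chain from $W$ to $B$ to a single bridge.

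Your route gives an explicit formula for the cross distances and turns each verification into a one-line estimate. The paper's route gets the metric axioms for free and only has to protect the original distances.

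One small inaccuracy: the ``through a point of $W$'' case is not the only place where $\dis R\le 2c$ is used. Your ``through a point of $B$'' case also uses it, in the step $d_A(a_1,a_2)\le d_B(b_1,b_2)+\dis R$. This does not affect correctness.
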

\begin{proof}
Allow movement within $W$ or within $B$ at the original distance, and add a bridge of length $c$ between $a\in A$ and $b\in B$ for each $(a,b)\in R$. Define the distance to be the infimum of the costs of finite chains of allowed movements. Chains exist between any two points because $R$ is a correspondence.

We verify that no original distance is shortened. An excursion from $W$ into $B$ and back has endpoints $a,a'\in A$, bridge partners $b,b'\in B$, and cost at least
\[
 2c+d_B(b,b')\ge d_A(a,a')=d_W(a,a').
\]
Replacing each such excursion by movement within $W$ shows that every chain with endpoints in $W$ has cost at least their $W$-distance. For endpoints in $B$, an excursion through $W$ has cost at least $2c+d_W(a,a')=2c+d_A(a,a')\ge d_B(b,b')$; eliminate these excursions instead. Thus restrictions agree with the original metrics. Every chain between the two disjoint parts contains a bridge and has cost at least $c$, so distinct cross-part points stay at positive distance. Symmetry is immediate, and concatenation of chains followed by approximation of the two infima proves the triangle inequality. Hence this is a metric. Each prescribed bridge has cost $c$, and surjectivity of $R$ proves mutual $c$-closeness.
\end{proof}

\begin{figure}[htbp]
\centering
\begin{tikzpicture}[font=\small]
 \draw[rounded corners,gray] (-.6,.5) rectangle (6.1,1.9);
 \draw[rounded corners,gray] (-.6,-1.8) rectangle (6.1,-.5);
 \node at (5.6,1.6) {$W$}; \node at (5.6,-1.5) {$B$};
 \node[point,label=left:{$a\in A$}] (a) at (.6,1) {};
 \node[point,label=right:{$a'\in A$}] (ap) at (4.6,1) {};
 \node[point,label=below:{$b$}] (b) at (.6,-1) {};
 \node[point,label=below:{$b'$}] (bp) at (4.6,-1) {};
 \draw[bridge] (a)--node[left] {$c$} (b);
 \draw[bridge] (ap)--node[right] {$c$} (bp);
 \draw[thick] (b)--node[above] {$d_B(b,b')$} (bp);
 \draw[transport] (a)--node[above] {$d_W(a,a')=d_A(a,a')$} (ap);
\end{tikzpicture}
\caption{The correspondence-bridge excursion (Lemma~\ref{glue}).}
\label{fig:excursion}
\end{figure}

\begin{corollary}[Simultaneous tree gluing]\label{treeglue}
Let $H$ be a finite tree and let $Y_a$ be a nonempty metric space at each vertex $a$. Suppose every edge $e=\{a,b\}$ has a correspondence between $Y_a$ and $Y_b$ of distortion at most $2c_e$, where $c_e>0$. Then all $Y_a$ embed isometrically in a common metric space $W$ with
\[
 \Hd^W(Y_a,Y_b)\le c_{\{a,b\}}\qquad\bigl(\{a,b\}\in E(H)\bigr).
\]
For vertices $a,b$, a point $z\in Y_a$ can be transported to a point $w\in Y_b$ with
\[
 d_W(z,w)\le\sum_{e\in[a,b]}c_e,
\]
where $[a,b]$ is the unique tree path.
\end{corollary}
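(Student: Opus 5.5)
We prove Corollary~\ref{treeglue} by induction on the number of vertices of $H$, adding one leaf at a time with Lemma~\ref{glue}. Fix a root $r$ and order the vertices $a_0=r,a_1,\dots,a_m$ so that each $a_k$ with $k\ge1$ is adjacent to exactly one earlier vertex $p(a_k)$. For instance, take breadth-first order; every initial segment $\{a_0,\dots,a_k\}$ then spans a subtree $H_k$.

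Set $W_0=Y_{a_0}$. Suppose $W_{k-1}$ is a metric space into which $Y_{a_0},\dots,Y_{a_{k-1}}$ embed isometrically as disjoint subspaces, with the Hausdorff bound for every edge of $H_{k-1}$. Since $p(a_k)$ is among the earlier vertices, $W_{k-1}$ contains an isometric copy of $A=Y_{p(a_k)}$. Apply Lemma~\ref{glue} with $B=Y_{a_k}$, the given correspondence $R$ on the edge $\{p(a_k),a_k\}$ and $c=c_{\{p(a_k),a_k\}}$. It yields a metric on $W_k=W_{k-1}\sqcup Y_{a_k}$ that restricts to the metric of $W_{k-1}$. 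Hence all earlier Hausdorff distances are unchanged, and the new edge satisfies $\Hd^{W_k}(Y_{p(a_k)},Y_{a_k})\le c$. The lemma also gives, for every $(x,y)\in R$, the pointwise bound $d_{W_k}(x,y)\le c$. After $m$ steps we set $W=W_m$. Later extensions never alter earlier distances, so every bound proved at an earlier stage survives in $W$.

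The transport estimate follows from the pointwise bridge bounds. Let $a=b_0,b_1,\dots,b_\ell=b$ be the tree path $[a,b]$ and let $z=z_0\in Y_{b_0}$. Each correspondence is surjective in both coordinates, so we may choose inductively $z_j\in Y_{b_j}$ with $(z_{j-1},z_j)$ in the correspondence of the edge $\{b_{j-1},b_j\}$, read in the correct orientation. Then $d_W(z_{j-1},z_j)\le c_{\{b_{j-1},b_j\}}$ by \eqref{eq:bridge}. Putting $w=z_\ell$, the triangle inequality in $W$ gives $d_W(z,w)\le\sum_{e\in[a,b]}c_e$. For $a=b$ we take $w=z$.

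No step is a real obstacle; the only point needing care is bookkeeping. Lemma~\ref{glue} must be applied with the already-embedded space in the role of $A$, which requires a suitable orientation of $R$ (replace it by its transpose, whose distortion is the same). We also must check that the pointwise inequality \eqref{eq:bridge} for each edge persists in the final space. It does, because every later gluing only enlarges the space while restricting to the previous metric.
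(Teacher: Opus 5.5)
Your proof is correct and follows the same route as the paper: root the tree, attach each vertex to its parent via Lemma~\ref{glue}, and obtain the transport estimate by choosing successive correspondence partners along $[a,b]$ and summing the bridge bounds \eqref{eq:bridge}. Your extra bookkeeping (the breadth-first ordering, transposing $R$ when needed, and the persistence of earlier distances and bridge bounds under later gluings) is what the paper's terse proof leaves implicit.
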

\begin{proof}
Choose a root. Attach each vertex after its parent, applying Lemma~\ref{glue} with $A$ equal to the parent's space in the union already constructed. Existing distances remain unchanged. A finite tree admits this ordering because each nonroot vertex has exactly one parent. To transport $z$, choose successively a partner in each edge correspondence along $[a,b]$; the sum of the bridge lengths bounds the resulting distance.
\end{proof}

%%%%%%%%%%%%%%%%%%%%%%%%%%%%%%%%%%%%%%%%%%
\section{Steiner minimal trees in the compact Gromov--Hausdorff space}\label{sec:compact}
\markright{\thesection.~Steiner minimal trees in the compact Gromov--Hausdorff space}
%%%%%%%%%%%%%%%%%%%%%%%%%%%%%%%%%%%%%%%%%%

The \emph{compact Gromov--Hausdorff space} $\cM$ consists of the isometry classes of nonempty compact metric spaces, with the distance $\GH$. We now prove that every finite family of points of $\cM$ is spanned by a Steiner minimal tree inside $\cM$. The essential new ingredient is \emph{compact replacement}: arbitrary Steiner spaces may be replaced by compact ones without increasing any edge length.

\subsection{A compact reservoir of distance profiles}

\begin{lemma}[Compact profile space, the Arzel\`a--Ascoli theorem \mycite{Sections~3.5--3.7}{TuzhilinLectures}]\label{profilespace}
Let $(X_i,d_i)$, $i\in I$, be finitely many nonempty compact metric spaces, and let $M\ge0$. The space
\[
 L_M=\prod_{i\in I}\Bigl\{f:X_i\to[0,M]:\ \bigl|f(x)-f(x')\bigr|\le d_i(x,x')\ \text{for all }x,x'\in X_i\Bigr\},
\]
with metric $d_L(f,g)=\max_{i\in I}\sup_{x\in X_i}|f_i(x)-g_i(x)|$, is compact whenever $I\ne\varnothing$.
\end{lemma}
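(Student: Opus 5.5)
The plan is to reduce the statement to the classical Arzel\`a--Ascoli theorem, applied factor by factor. The product is finite and carries the max metric, and a finite product of compact metric spaces with the max metric is compact. It therefore suffices to show that, for a single nonempty compact metric space $(X,d)$ and $M\ge0$, the set
\[
 F_M(X)=\bigl\{f:X\to[0,M]:\ |f(x)-f(x')|\le d(x,x')\bigr\}
\]
is compact in the uniform metric $\sup_{x\in X}|f(x)-g(x)|$. The hypothesis $I\ne\varnothing$ is only there so that $d_L$ is a maximum over a nonempty index set. Each factor is nonempty because it contains the constant function $0$. Hence $L_M$ is a nonempty compact space.

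For a single factor, first note that $F_M(X)$ is a subset of $C(X,\R)$, since $1$-Lipschitz maps are continuous. The family is uniformly bounded by $M$ and equicontinuous, in fact uniformly equi-Lipschitz with constant $1$. Arzel\`a--Ascoli for a compact domain then says that $F_M(X)$ is relatively compact in $C(X,\R)$ with the sup metric.

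It remains to check that $F_M(X)$ is closed. Let $f_n\to f$ uniformly with $f_n\in F_M(X)$. The inequalities $0\le f_n(x)\le M$ and $|f_n(x)-f_n(x')|\le d(x,x')$ are closed conditions under pointwise limits, so $f$ satisfies them as well. A closed, relatively compact subset of the complete space $C(X,\R)$ is compact.

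If one prefers to avoid quoting Arzel\`a--Ascoli, a direct argument also works: show that $F_M(X)$ is complete and totally bounded.
\begin{itemize}
 \item \textbf{Completeness.} Uniform Cauchy sequences converge, and the limit stays in $F_M(X)$ by the closedness observation above.
 \item \textbf{Total boundedness.} Given $r>0$, pick a finite $r$-net $x_1,\dots,x_k$ of $X$ and a finite $r$-net $t_1,\dots,t_m$ of $[0,M]$. Group the functions $f\in F_M(X)$ by the nearest-value pattern $(f(x_j))_{j\le k}$ rounded to this grid. Two functions with the same pattern differ by at most $2r$ at each $x_j$. For an arbitrary $x$, choose $x_j$ with $d(x,x_j)<r$; the $1$-Lipschitz bound then gives $|f(x)-g(x)|\le 4r$. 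Hence finitely many balls of radius $4r$ cover $F_M(X)$.
\end{itemize}

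I do not expect a genuine obstacle. The only point needing care is that the sup metric is the correct topology and that the closedness and continuity hypotheses are actually verified, rather than just citing the theorem.
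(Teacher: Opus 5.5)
Your proposal is correct and matches the paper's approach: the paper gives no separate proof and simply cites Arzel\`a--Ascoli, and your factorwise reduction (finite product under the max metric, uniform boundedness by $M$, the common Lipschitz constant $1$, and closedness of the defining inequalities under uniform limits) is exactly what that citation covers. Your direct completeness-plus-total-boundedness argument is a valid self-contained alternative.
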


\subsection{Distance profiles and compact replacement}

\begin{lemma}[Profiles of a glued tree]\label{profile}
Let $H$ be a finite tree with nonempty terminal set $I$, nonempty compact terminal spaces $X_i$, and nonempty spaces $Y_a$ at all vertices. Suppose its total $\GH$ edge length is at most $C<\infty$. For every $0<\varepsilon\le1$ there are compact sets $A_a^\varepsilon$ in a single compact profile space $L_M$, independent of $\varepsilon$, such that
\begin{enumerate}[label=(\roman*)]
\item $A_i^\varepsilon$ is isometric to $X_i$ for each terminal $i\in I$;
\item $\Hd(A_a^\varepsilon,A_b^\varepsilon)\le\GH(Y_a,Y_b)+\varepsilon$ for every edge $\{a,b\}$.
\end{enumerate}
One may take $M=D+3C+|E(H)|$, where $D=\max_{i\in I}\diam X_i$.
\end{lemma}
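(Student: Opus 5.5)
The plan is to glue the tree into a common space, map every point to its vector of distances to the terminals, and check that this Kuratowski-type map is isometric on each terminal and $1$-Lipschitz overall.

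\emph{Gluing.} For each edge $e=\{a,b\}$ set $c_e=\GH(Y_a,Y_b)+\varepsilon/|E(H)|$, which is positive. Lemma~\ref{basic} gives a correspondence between $Y_a$ and $Y_b$ with distortion less than $2c_e$, since $\GH(Y_a,Y_b)\le C<\infty$. Corollary~\ref{treeglue} then realizes all $Y_a$ in one metric space $W_\varepsilon$ with $\Hd^{W_\varepsilon}(Y_a,Y_b)\le c_e\le\GH(Y_a,Y_b)+\varepsilon$ on every edge. It also gives the transport estimate
\[
d_W(z,Y_b)\le\sum_e c_e\le C+\varepsilon\le C+1
\]
for every $z\in Y_a$.

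\emph{Profile map.} Define $\Phi\colon W_\varepsilon\to\prod_i\R^{X_i}$ by $\Phi(z)_i(x)=d_W(z,x)$ for $x\in X_i$. Each $\Phi(z)_i$ is $1$-Lipschitz on $X_i$. Moreover $\Phi$ is $1$-Lipschitz for $d_L$, since $|d_W(z,x)-d_W(z',x)|\le d_W(z,z')$. Fix a base terminal $i_0$ and fix $z\in Y_a$. Transport $z$ to some $w\in X_{i_0}$ with $d_W(z,w)\le C+1$. Next, for another terminal $j$, any $x'\in X_{i_0}$ lies within $C+1$ of some point of $X_j$ (Corollary~\ref{treeglue} again), so every $x\in X_j$ satisfies
\[
d_W(z,x)\le d_W(z,w)+\diam\text{-type bounds}\le (C+1)+\bigl(D+(C+1)\bigr)+\ldots
\]
A careful count (transport to $X_j$ directly, then $\diam X_j\le D$) gives $d_W(z,x)\le D+C+1$, comfortably within $M=D+3C+|E(H)|$ since $|E(H)|\ge1$ whenever edges exist; the trivial one-vertex tree is handled separately. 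The generous constant in the statement presumably absorbs a cruder chain of estimates. The point is that $M$ does not depend on $\varepsilon$ or on the glued space, so $\Phi(W_\varepsilon)$ is contained in the single space $L_M$, which is compact by Lemma~\ref{profilespace}.

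\emph{Compact sets and the two claims.} Put $A_a^\varepsilon=\overline{\Phi(Y_a)}$. This is a closed subset of the compact space $L_M$, hence compact. For (ii), $\Phi$ is $1$-Lipschitz, so Hausdorff distances do not increase, and closures do not change them; therefore $\Hd(A_a^\varepsilon,A_b^\varepsilon)\le\Hd^W(Y_a,Y_b)\le\GH(Y_a,Y_b)+\varepsilon$. For (i), $\Phi$ restricted to $X_i$ is an isometry onto its image: for $x,y\in X_i$, evaluating the $i$-th coordinate at $y$ gives $|d_W(x,y)-d_W(y,y)|=d_i(x,y)$, which combined with the Lipschitz bound forces $d_L(\Phi x,\Phi y)=d_i(x,y)$. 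The image is compact, hence closed, so $A_i^\varepsilon=\Phi(X_i)$ is isometric to $X_i$.

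The main obstacle is the uniform bound $M$, namely ensuring that every profile value stays in $[0,M]$. This rests on controlling $d_W(z,X_j)$ through the tree path via Corollary~\ref{treeglue} and then adding $\diam X_j$. Everything else is formal.
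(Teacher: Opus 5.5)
Your proposal is correct and follows essentially the paper's proof: glue along the tree with Corollary~\ref{treeglue}, take the vector of distances to the terminals, bound it by transporting directly to $X_j$ and adding $\diam X_j\le D$, then obtain (i) from the Kuratowski property and (ii) from the $1$-Lipschitz bound. In the bound, keep only the direct count and drop the aborted first chain; the one-vertex case is immediate, since then all profile values are at most $D\le M$. Taking closures $A_a^\varepsilon=\overline{\Phi(Y_a)}$ is actually the right choice: the paper asserts that $\Phi_a^\varepsilon(Y_a)$ itself is compact because $Y_a$ is bounded, which need not hold for noncomplete $Y_a$, and the paper later tacitly uses the closed profile images in Proposition~\ref{fixed}.
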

\begin{proof}
Put $g_e=\GH(Y_a,Y_b)$ for $e=\{a,b\}$. Choose edge correspondences of distortion at most $2(g_e+\varepsilon)$ by Lemma~\ref{basic}, and glue the tree with bridge lengths $c_e=g_e+\varepsilon$ using Corollary~\ref{treeglue}. Write $W_\varepsilon$ for the resulting common ambient metric space.

Fix a terminal $i_0\in I$. The diameter estimate uses the \emph{exact} $\GH$ edge lengths, not the approximate correspondences. For adjacent vertices $u,v$, Lemma~\ref{basic} gives
\[
 \diam Y_u\le\diam Y_v+2\GH(Y_u,Y_v)=\diam Y_v+2g_{\{u,v\}}.
\]
Starting at the bounded terminal $X_{i_0}$ and summing along the path from $a$ to $i_0$ gives $\diam Y_a\le D+2C$, and shows in particular that every vertex space is bounded. By contrast, actual transport inside $W_\varepsilon$ uses bridges of length $c_e=g_e+\varepsilon$, so the transport estimate below does contain $\varepsilon$.

For any vertex $a$, any $z\in Y_a$, any terminal $i\in I$ and any $x\in X_i$, transport $z$ to a point $w\in X_i$ along the tree as in Figure~\ref{fig:profile-path}. Write $[a,i]=(v_0,\ldots,v_k)$ with $v_0=a$, $v_k=i$ and $e_j=\{v_{j-1},v_j\}$. Choose transported points $z_j\in Y_{v_j}$ with $z_0=z$, $z_k=w$ and $(z_{j-1},z_j)$ in the correspondence on $e_j$ for every $1\le j\le k$.
\begin{figure}[htbp]
\centering
\begin{tikzpicture}[font=\small]
 \node[vertex,label=above:{$v_0=a$}] (a) at (0,1.5) {};
 \node[vertex,label=below:{$v_1$}] (b) at (2.4,1.5) {};
 \node (dots) at (4.6,1.5) {$\cdots$};
 \node[terminal,label=above:{$v_k=i$}] (i) at (7,1.5) {$X_i$};
 \draw (a)--node[below] {$e_1$} (b);
 \draw (b)--node[below] {$e_2$} (dots);
 \draw (dots)--node[below] {$e_k$} (i);
 \draw[gray] (b)--(2.4,2.25); \node[font=\scriptsize] at (3.5,2.25) {off-path branch};
 \node[point,label=below:{$z_0=z\in Y_a$}] (z) at (0,0) {};
 \node[point,label=below:{$z_1\in Y_{v_1}$}] (z1) at (2.4,0) {};
 \node (pdots) at (4.6,0) {$\cdots$};
 \node[point,label=below:{$z_k=w\in X_i$}] (w) at (7,0) {};
 \draw[bridge,transport] (z)--node[above] {$c_{e_1}$} (z1);
 \draw[bridge,transport] (z1)--(pdots);
 \draw[bridge,transport] (pdots)--node[above] {$c_{e_k}$} (w);
 \node[point,label=right:{$x\in X_i$}] (x) at (9,0) {};
 \draw[transport] (w) to[bend left=25] node[above] {$d_i(w,x)$} (x);
\end{tikzpicture}
\caption{Transport along the terminal path (Lemma~\ref{profile}).}
\label{fig:profile-path}
\end{figure}
Then
\[
 d_{W_\varepsilon}(z,x)\le\sum_{e\in[a,i]}c_e+d_i(w,x)\le C+|E(H)|+D\le M.
\]
The larger value $M=D+3C+|E(H)|$ also bounds distances between arbitrary vertex-space points; either bound suffices here.

For every vertex $a\in I\sqcup V$ the \emph{distance-profile map} has domain $Y_a$ and codomain the compact function space $L_M$ of Lemma~\ref{profilespace}:
\[
 \Phi_a^\varepsilon:Y_a\longrightarrow L_M,\qquad
 \Phi_a^\varepsilon(z)=\bigl(x\mapsto d_{W_\varepsilon}(z,x)\bigr)_{i\in I}\quad(z\in Y_a).
\]
Explicitly, its codomain is $L_M=\prod_{i\in I}\Lip_1\bigl(X_i,[0,M]\bigr)$, where $\Lip_1\bigl(X_i,[0,M]\bigr)$ denotes the set of all $1$-Lipschitz functions from $X_i$ to $[0,M]$. The $i$-th coordinate of $\Phi_a^\varepsilon(z)$ is the function
\[
 \bigl(\Phi_a^\varepsilon(z)\bigr)_i:X_i\longrightarrow[0,M],\qquad
 \bigl(\Phi_a^\varepsilon(z)\bigr)_i(x)=d_{W_\varepsilon}(z,x)\qquad(x\in X_i),
\]
which is $1$-Lipschitz because
\[
 \bigl|\bigl(\Phi_a^\varepsilon(z)\bigr)_i(x)-\bigl(\Phi_a^\varepsilon(z)\bigr)_i(x')\bigr|
 \le d_i(x,x')\qquad(i\in I,\ x,x'\in X_i,\ z\in Y_a)
\]
by the triangle inequality. Hence $\Phi_a^\varepsilon$ is well defined. Set $A_a^\varepsilon=\Phi_a^\varepsilon(Y_a)$, a compact subset of $L_M$ because $Y_a$ is bounded and $\Phi_a^\varepsilon$ is continuous.

\begin{remark}[Profiles and the Kuratowski embedding]\label{kuratowski}
At a terminal $i$ the $i$-th coordinate of a profile is the function $x\mapsto d_{W_\varepsilon}(z,x)$ on $X_i$; since $z,x\in X_i$ and $d_{W_\varepsilon}$ extends $d_i$ there, this is the map $z\mapsto d_i(z,\cdot)$. The latter is exactly the Kuratowski--Fr\'echet embedding of $X_i$ into the space $C(X_i)$ of continuous functions with the supremum metric; see \mycite{Theorem~2.25}{TuzhilinLectures}. Thus the terminal profile map $\Phi_i^\varepsilon$ is the Kuratowski embedding of $X_i$ into the $i$-th factor of $L_M$, and the profile map $\Phi_a^\varepsilon$ extends this embedding to all vertex spaces of the tree at once.
\end{remark}

For a terminal $i$ the map $\Phi_i^\varepsilon$ is an isometry of $X_i$ onto $A_i^\varepsilon$: its $i$-th coordinate is the identity, since $z,x\in X_i$ and $d_{W_\varepsilon}(z,x)=d_i(z,x)$, while its other coordinates are determined by these. Thus $A_i^\varepsilon$ is isometric to $X_i$.

Finally, let $\{a,b\}$ be an edge and let $z\in Y_a$. Choose a bridge partner $z'\in Y_b$ with $(z,z')\in R_{\{a,b\}}$; then $d_{W_\varepsilon}(z,z')\le c_{\{a,b\}}$. For every terminal $i$ and every $x\in X_i$,
\[
 \bigl|\bigl(\Phi_a^\varepsilon(z)\bigr)_i(x)-\bigl(\Phi_b^\varepsilon(z')\bigr)_i(x)\bigr|
 =\bigl|d_{W_\varepsilon}(z,x)-d_{W_\varepsilon}(z',x)\bigr|
 \le d_{W_\varepsilon}(z,z')\le c_{\{a,b\}}.
\]
Hence $d_L\bigl(\Phi_a^\varepsilon(z),\Phi_b^\varepsilon(z')\bigr)\le c_{\{a,b\}}$, and, taking the maximum over $z\in Y_a$,
\[
 \Hd(A_a^\varepsilon,A_b^\varepsilon)\le c_{\{a,b\}}=g_{\{a,b\}}+\varepsilon.
\]
This is (ii), and the proof is complete.
\end{proof}

\begin{theorem}[Compact replacement]\label{replacement}
Let $H$ be a finite tree with nonempty compact terminal spaces $X_i$, $i\in I$, and nonempty spaces $Y_a$ at all vertices, of total $\GH$ edge length at most $C<\infty$. Then there are nonempty compact spaces $K_a$ with $K_i$ isometric to $X_i$ for every terminal $i$ and
\[
 \GH(K_a,K_b)\le\GH(Y_a,Y_b)
\]
for every edge $\{a,b\}$. The terminal spaces may consequently be kept equal to the prescribed $X_i$ without increasing any edge length.
\end{theorem}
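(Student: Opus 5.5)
The plan is to pass to a limit $\varepsilon\to0$ of the profile sets from Lemma~\ref{profile}, using compactness of the hyperspace. First apply Lemma~\ref{profile} to the tree $H$ with the given spaces $Y_a$ and the bound $C$. This yields, for each $\varepsilon=1/n$, compact sets $A_a^{1/n}\subseteq L_M$ with $M=D+3C+|E(H)|$ fixed. These satisfy $A_i^{1/n}\cong X_i$ and
\[
 \Hd(A_a^{1/n},A_b^{1/n})\le\GH(Y_a,Y_b)+1/n
\]
on every edge. By Lemma~\ref{profilespace} the space $L_M$ is compact, so by Lemma~\ref{hyperspace} the hyperspace $\cK(L_M)$ is compact. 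The finite product $\cK(L_M)^{I\sqcup V}$ is then compact as well. We may therefore pass to a subsequence along which $A_a^{1/n}\to K_a$ in $\Hd$ for every vertex $a$ simultaneously; each $K_a$ is a nonempty compact subset of $L_M$.

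Next pass the edge inequality to the limit. The estimate
\[
 |\Hd(A,B)-\Hd(A',B')|\le\Hd(A,A')+\Hd(B,B')
\]
from Lemma~\ref{hyperspace} gives
\[
 \Hd(K_a,K_b)=\lim_n\Hd(A_a^{1/n},A_b^{1/n})\le\GH(Y_a,Y_b).
\]
Lemma~\ref{ambient} then yields $\GH(K_a,K_b)\le\Hd(K_a,K_b)\le\GH(Y_a,Y_b)$.

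The step needing care is the terminals: we must show that $K_i$ is isometric to $X_i$. By Lemma~\ref{ambient},
\[
 \GH(K_i,X_i)\le\GH(K_i,A_i^{1/n})+\GH(A_i^{1/n},X_i)\le\Hd(K_i,A_i^{1/n})+0\to0,
\]
so $\GH(K_i,X_i)=0$, and Lemma~\ref{zero} gives an isometry because both spaces are compact. This avoids having to control the profile embeddings themselves, which do depend on $\varepsilon$ through $W_\varepsilon$.

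For the final sentence of the statement, replace each $K_i$ by $X_i$ itself. The $\GH$ distance is an isometry invariant: if $K_i\cong X_i$, then $\GH(X_i,K_b)=\GH(K_i,K_b)$, by composing correspondences with the isometry. Hence no edge length changes. The main potential obstacle is simply ensuring that all $A_a^\varepsilon$ live in one compact space independent of $\varepsilon$. Lemma~\ref{profile} already guarantees this, since $M$ does not depend on $\varepsilon$, so the remaining argument is a routine compactness-and-continuity passage.
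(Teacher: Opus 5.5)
Your proposal is correct and is essentially the paper's proof: you apply Lemma~\ref{profile} with $\varepsilon=1/n$, extract a common Hausdorff-convergent subsequence in the compact hyperspace $\cK(L_M)$, pass the edge bounds to the limit by continuity of $\Hd$ and Lemma~\ref{ambient}, and identify the terminal limits via $\GH(K_i,X_i)=0$ and Lemma~\ref{zero}. One minor point, which the paper's statement of Lemma~\ref{profile} shares: a continuous image of a bounded space need not be compact, so the Steiner profile images $\Phi_a^\varepsilon(Y_a)$ should be replaced by their closures in $L_M$ (as the paper does in Proposition~\ref{fixed}); this changes no Hausdorff distance, and the terminal images are already compact.
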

\begin{proof}
Apply Lemma~\ref{profile} with $\varepsilon=1/j$, $j\ge1$. All sets $A_a^{1/j}$ lie in the fixed compact space $L_M$. By Lemma~\ref{hyperspace}, extracting successive subsequences at the finitely many vertices, arrange $A_a^{1/j}\to K_a$ in Hausdorff distance for every vertex $a$. Every $K_a$ is nonempty and compact. Continuity of the Hausdorff distance gives $\Hd(K_a,K_b)\le\GH(Y_a,Y_b)$, and Lemma~\ref{ambient} converts this into the required $\GH$ bound. At a terminal $i$, since $A_i^{1/j}$ is isometric to $X_i$,
\[
 \GH(X_i,K_i)=\GH(A_i^{1/j},K_i)\le\Hd(A_i^{1/j},K_i)\longrightarrow0,
\]
so Lemma~\ref{zero} identifies $K_i$ isometrically with $X_i$. Replacing a vertex space by an isometric copy preserves all its $\GH$ distances, so the prescribed terminals can be restored exactly.
\end{proof}

\subsection{Attainment for a fixed tree shape}

\begin{proposition}[Fixed-tree minimum]\label{fixed}
Fix a finite tree $H$ on $I\sqcup V$ with $I\ne\varnothing$ and nonempty compact terminal spaces $X_i$. The infimum of the length over all nonempty metric spaces placed at the Steiner vertices $V$ is attained by nonempty compact spaces.
\end{proposition}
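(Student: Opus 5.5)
Let $\lambda$ denote the infimum of $\len$ over all assignments of nonempty metric spaces $Y_a$, $a\in V$, with $Y_i=X_i$ for $i\in I$. The plan is to take a minimizing sequence and realize all of its members inside one fixed compact hyperspace. Then Hausdorff compactness produces a limit assignment whose length is at most $\lambda$.

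First we show $\lambda<\infty$. Place a copy of a fixed terminal, say $X_{i_0}$, at every Steiner vertex. Every edge length is then a Gromov--Hausdorff distance between compact spaces, which is at most the maximum of the two diameters and hence finite. So there is a constant $C<\infty$ (for example $C=\lambda+1$) and a minimizing sequence of assignments $(Y_a^{(j)})$ with $\len\le C$ and $\len\to\lambda$.

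Next, for each $j$, apply Theorem~\ref{replacement} to replace the assignment by compact spaces $K_a^{(j)}$ with $K_i^{(j)}=X_i$ without increasing any edge length. This is not quite enough, because we need the replacements for different $j$ to lie in a common compact space. So instead of quoting Theorem~\ref{replacement} as a black box, we use Lemma~\ref{profile} directly with $\varepsilon=1/j$. It gives sets $A_a^{(j)}\in\cK(L_M)$ with $A_i^{(j)}$ isometric to $X_i$ and
\[
 \Hd\bigl(A_a^{(j)},A_b^{(j)}\bigr)\le\GH\bigl(Y_a^{(j)},Y_b^{(j)}\bigr)+1/j .
\]
The key point is that the constant $M=D+3C+|E(H)|$ depends only on $D$, $C$ and $H$, not on $j$. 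Hence all these sets lie in the single compact hyperspace $\cK(L_M)$ of Lemma~\ref{hyperspace}.

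Now extract successive subsequences over the finitely many vertices so that $A_a^{(j)}\to K_a$ in $\cK(L_M)$ for every vertex $a$. The continuity estimate of Lemma~\ref{hyperspace} then gives, for each edge,
\[
 \Hd(K_a,K_b)\le\liminf_j\Bigl(\GH\bigl(Y_a^{(j)},Y_b^{(j)}\bigr)+1/j\Bigr).
\]
Summing over the finitely many edges and applying Lemma~\ref{ambient} yields
\[
 \sum_{\{a,b\}}\GH(K_a,K_b)\le\liminf_j\len^{(j)}=\lambda .
\]
At each terminal, $\GH(X_i,K_i)\le\Hd\bigl(A_i^{(j)},K_i\bigr)\to0$, so Lemma~\ref{zero} identifies $K_i$ isometrically with $X_i$. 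Replacing $K_i$ by $X_i$ changes no edge length. Thus the compact spaces $K_v$, $v\in V$, achieve length at most $\lambda$, and therefore exactly $\lambda$.

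The main obstacle is the uniformity step: we must check that Lemma~\ref{profile} really places every member of the sequence in the same $L_M$. This requires a uniform bound $C$ on the lengths, which is why finiteness of $\lambda$ is established first. We must also handle the lower semicontinuity of the summed edge lengths correctly, taking $\liminf$ edge by edge along a common subsequence. Everything else is bookkeeping already carried out in the proof of Theorem~\ref{replacement}.
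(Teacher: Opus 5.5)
Your proposal is correct and follows the paper's proof essentially step by step. The shared steps are: a minimizing sequence with lengths bounded by $C=\lambda+1$; Lemma~\ref{profile} with $\varepsilon=1/j$ in the $j$-independent profile space $L_M$; a common convergent subsequence in $\cK(L_M)$; summation of the edge bounds; and terminal identification via Lemma~\ref{zero}. The differences are cosmetic: you use copies of $X_{i_0}$ rather than one-point spaces as the initial finite-length competitor, and $\liminf$ instead of limits along the subsequence. As the paper does, you may want to pass to the closures of the profile images to be sure they lie in $\cK(L_M)$; this changes no Hausdorff distance.
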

\begin{proof}
A finite-length competitor exists: put a one-point space at each Steiner vertex. All terminal spaces are bounded, and the full product correspondence between two bounded nonempty spaces has finite distortion, so the infimum $\lambda$ is finite and nonnegative. For each $j\ge1$ choose a realization $Y^j$ with length $l_j<\lambda+1/j$; then $\lambda\le l_j\to\lambda$, and all lengths are at most $C=\lambda+1$.

Apply the construction of Lemma~\ref{profile} to $Y^j$ with $\varepsilon=1/j$. Its profile space $L_M$ is the same for every $j$, because the terminal spaces, the bound $C$ and the tree shape are fixed. Let $A_a^j\in\cK(L_M)$ be the closed profile images. Extract a common subsequence with $A_a^j\to K_a$ for every vertex, using Lemma~\ref{hyperspace}. The terminal argument of Theorem~\ref{replacement} shows that $K_i$ is isometric to $X_i$. For every edge $e=\{a,b\}$,
\[
 \GH(K_a,K_b)\le\Hd(K_a,K_b)=\lim_j\Hd(A_a^j,A_b^j).
\]
Summing these finitely many inequalities and using the profile bounds gives
\[
 \sum_{\{a,b\}\in E(H)}\GH(K_a,K_b)
 \le\lim_j\sum_{\{a,b\}\in E(H)}\Hd(A_a^j,A_b^j)
 \le\lim_j\Bigl(l_j+\frac{|E(H)|}{j}\Bigr)=\lambda.
\]
Restore the terminals to $X_i$. The definition of $\lambda$ gives the opposite inequality for this realization, proving attainment. No individual convergence of the original edge lengths is required.
\end{proof}

\subsection{The compact Steiner minimal tree}

\begin{theorem}[Compact Steiner minimal trees]\label{main}
For every finite nonempty family $(X_i)_{i\in I}$ of nonempty compact metric spaces there is a finite tree-shaped network with these terminals and nonempty compact metric spaces at all Steiner vertices whose length is no greater than that of every finite connected network with the same terminals, even if the latter uses arbitrary metric spaces at its Steiner vertices. For $|I|\ge2$ one may take at most $|I|-2$ Steiner vertices.
\end{theorem}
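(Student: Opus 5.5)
The plan is to minimize over the finitely many tree shapes permitted by Lemma~\ref{reduction} and to realize each shape's infimum by Proposition~\ref{fixed}. The case $|I|=1$ is handled separately: the one-vertex tree consisting of the terminal $X_i$ alone has length $0$, which is no greater than the length of any network, so nothing more is needed there. From now on assume $n=|I|\ge2$.

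\emph{Step 1 (the candidate family).} Consider all labelled trees $H$ on a vertex set $I\sqcup V$, where $V$ is one of the sets $\{1,\dots,s\}$ with $0\le s\le n-2$. There are only finitely many such shapes, by the counting remark following Theorem~\ref{INTthm}. For each shape $H$, Proposition~\ref{fixed} supplies nonempty compact spaces $K^H_v$ at the Steiner vertices that attain the shape infimum
\[
 \lambda_H=\inf\Bigl\{\len N:\ N \text{ has shape } H\Bigr\},
\]
where the infimum is taken over arbitrary nonempty metric spaces at the Steiner vertices, and each $\lambda_H$ is finite. Choose a shape $H_*$ with minimal $\lambda_{H_*}$, and let $N_*$ be the attaining network: it has terminals $X_i$, compact Steiner spaces, and at most $n-2$ Steiner vertices.

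\emph{Step 2 (comparison with an arbitrary competitor).} Let $N$ be any finite connected network with the same terminals and arbitrary Steiner spaces. If $\len N=\infty$ there is nothing to prove. Otherwise apply Lemma~\ref{reduction}, which yields a tree network $N'$ with the same terminals, at most $n-2$ Steiner vertices and $\len N'\le\len N$. The Steiner vertices of $N'$ are a subset of those of $N$, carrying the same spaces, so they are nonempty metric spaces, exactly as Proposition~\ref{fixed} allows. After relabelling them, the shape of $N'$ is one of the shapes $H$ from Step 1, and therefore
\[
 \len N_*=\lambda_{H_*}\le\lambda_H\le\len N'\le\len N.
\]

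\emph{Step 3 (the main obstacle: where compactness actually enters).} The delicate point is that Proposition~\ref{fixed} compares against arbitrary, possibly noncompact, Steiner spaces, and that the reduction in Lemma~\ref{reduction} never creates new spaces, so the competitor class is preserved throughout. I would also check the remaining bookkeeping. The reduced tree may have terminals of degree greater than one, but it is still a labelled tree on $I\sqcup V$, so it is covered by the enumeration in Step 1. The simplicity of the graph must be preserved under the suppression of degree-two vertices, since a parallel edge could arise; if it does, one copy is discarded, which only decreases the length. Finally, the consequence for $\cM$ follows immediately, because the vertices of $N_*$ are compact spaces and therefore points of $\cM$.
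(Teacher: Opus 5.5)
Your proposal is correct and follows the paper's proof essentially step for step. You enumerate the finitely many labelled tree shapes with at most $|I|-2$ Steiner vertices, realize each shape's infimum by compact spaces via Proposition~\ref{fixed}, take the least attained value, and compare any finite-length competitor after reducing it by Lemma~\ref{reduction}. Two small remarks. The parallel-edge worry in Step~3 cannot arise, since suppressing a degree-two vertex $v$ on a path $a,v,b$ of a tree never produces an edge $\{a,b\}$ that is already present, as that would close a cycle. For nonemptiness of the competitor's Steiner spaces, the paper argues that a finite-length network cannot carry an empty space, whereas you rely on the standing convention that all metric spaces are nonempty; either suffices.
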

\begin{proof}
If $|I|=1$ the terminal alone is a tree of length zero, which is minimal. Suppose $n=|I|\ge2$. By Lemma~\ref{reduction}, every finite connected competitor reduces, without increasing its length, to a tree with at most $n-2$ Steiner vertices. Labelling the possible Steiner vertices by $\{1,\ldots,m\}$, $m\le n-2$, yields only finitely many labelled tree shapes, since a graph is a subset of the finitely many unordered pairs of a finite vertex set.

Proposition~\ref{fixed} supplies a compact minimum for each shape. Choose the least of these finitely many attained lengths and call its realizing network $N_*$; its length is finite. Any infinite-length competitor is automatically no shorter than $N_*$. If $N$ is a finite-length competitor, then all its spaces are nonempty: a correspondence between an empty and a nonempty space does not exist, so their $\GH$ distance is infinite, and connectivity to a nonempty terminal rules out an empty vertex of a finite-length network. Lemma~\ref{reduction} transforms $N$ into a no-longer tree with at most $n-2$ Steiner vertices, which relabels to one of the considered shapes; its length is at least the minimum for that shape, hence at least $\len N_*$. Therefore $\len N_*\le\len N$ for every competitor.
\end{proof}

\begin{corollary}[A tree inside $\cM$]
Every finite family of points of the compact Gromov--Hausdorff space $\cM$ admits a Steiner minimal tree inside $\cM$.
\end{corollary}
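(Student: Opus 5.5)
The plan is to read this corollary off Theorem~\ref{main}, taking care to check that a Steiner minimal tree "inside $\cM$" in the sense of Definition~\ref{networkdef} is the same thing the theorem provides. Take a finite family of points of $\cM$ and, for each index $i\in I$, choose a representative nonempty compact metric space $X_i$. The choice of representative does not matter, since $\GH$ is constant on isometry classes: the triangle inequality of Lemma~\ref{basic}, together with $\GH=0$ between isometric spaces, shows that replacing a vertex space by an isometric copy leaves every edge length unchanged. If the family is empty, the statement is vacuous; otherwise $I\ne\varnothing$ and Theorem~\ref{main} applies.

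Theorem~\ref{main} gives a tree network $N_*$ with terminals $X_i$ and nonempty compact spaces at all its Steiner vertices. Passing to isometry classes, every vertex of $N_*$ is a point of $\cM$, and its length computed in $\cM$ equals $\len N_*$. Now let $N$ be any connected network in $\cM$ with the same terminal points. Choosing representatives at its vertices turns $N$ into a connected network of metric spaces with terminals $X_i$ and the same length. Theorem~\ref{main} then gives $\len N_*\le\len N$, so $N_*$ is a Steiner minimal tree in $\cM$, and it has at most $|I|-2$ Steiner vertices when $|I|\ge2$.

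The only delicate point is that some points of the family may coincide, or that a Steiner vertex of $N_*$ may carry a space isometric to a terminal or to another vertex. Definition~\ref{networkdef} explicitly allows different vertices to carry isometric spaces, so $N_*$ is still a legitimate tree network in $\cM$. For the same reason, a competitor in $\cM$ is a labelled graph whose vertices carry points of $\cM$ that need not be distinct, which matches the competitors allowed in Theorem~\ref{main}.
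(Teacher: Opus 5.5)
Your proposal is correct and follows the paper's proof: you choose compact representatives, apply Theorem~\ref{main}, and pass to isometry classes, which preserves every edge length; you then compare an arbitrary competitor in $\cM$ by lifting it back to representatives. One small correction: the empty family is not vacuous, since the corollary asserts that a tree exists, but it is settled trivially by a single vertex carrying a one-point space, as in the paper's remark on degenerate families.
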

\begin{proof}
Choose a nonempty compact representative $X_i$ of each terminal point and apply Theorem~\ref{main}; then replace every vertex space by its isometry class. Lemmas~\ref{ambient} and~\ref{zero} identify the correspondence distance with the metric of $\cM$, so this passage preserves every edge length and the total length. Conversely a finite network in $\cM$ is represented by finitely many nonempty compact spaces with the same length, so the comparison with all competitors transfers to $\cM$.
\end{proof}

\begin{remark}[Degenerate families and zero-length edges]
If there are no terminals, a single vertex carrying a one-point space is a tree of length zero and is minimal. Repeated terminal isometry classes are allowed: vertices are labelled even when their images in $\cM$ coincide, so zero-length edges are permitted. If networks are required to have distinct image vertices, identify coincident vertices, discard loops and repeated edges, and take a spanning tree; connectivity is preserved and the length does not increase. A mixed family of empty and nonempty terminals has no finite-length connected network and is not a terminal family in the classical space $\cM$.
\end{remark}

The finite-metric-space predecessor of Theorem~\ref{main} is due to \cite{INT}, recalled in Theorem~\ref{INTthm}. The present theorem extends it from finite to arbitrary compact terminals, and the next section replaces compactness by membership in a single Gromov--Hausdorff cloud.

%%%%%%%%%%%%%%%%%%%%%%%%%%%%%%%%%%%%%%%%%%
\section{Steiner minimal trees in a Gromov--Hausdorff cloud}\label{sec:cloud}
\markright{\thesection.~Steiner minimal trees in a Gromov--Hausdorff cloud}
%%%%%%%%%%%%%%%%%%%%%%%%%%%%%%%%%%%%%%%%%%

We now study Steiner minimal trees inside a single cloud. Subsection~\ref{clouds} introduces Gromov--Hausdorff clouds and their elementary properties, and Subsection~\ref{propercobounded} those of proper, isometrically cobounded spaces. We then fix one cloud $\cC$ and require every terminal, Steiner vertex and competitor below to lie in $\cC$, so that all $\GH$ distances are finite; the compactness device is a pointed ultralimit, which preserves the geometry of a proper, isometrically cobounded space even when the chosen base points escape to infinity.

\subsection{Gromov--Hausdorff clouds}\label{clouds}

\begin{definition}[Cloud]\label{clouddef}
For a metric space $X$ its \emph{Gromov--Hausdorff cloud} is
\[
 \cC_X=\{Y:\GH(X,Y)<\infty\},
\]
considered up to zero $\GH$ distance. By the triangle inequality, finite $\GH$ distance is an equivalence relation, so two clouds are either equal or disjoint.
\end{definition}

\begin{lemma}[Finite paths remain in one cloud]\label{cloudpath}
Let $Y_0,\ldots,Y_m$ be metric spaces such that $\GH(Y_{j-1},Y_j)<\infty$ for $1\le j\le m$. Then
\[
 \GH(Y_0,Y_m)\le\sum_{j=1}^{m}\GH(Y_{j-1},Y_j)<\infty.
\]
In particular every vertex of a connected finite-length $\GH$ network belongs to the cloud of each terminal.
\end{lemma}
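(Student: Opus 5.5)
The inequality follows by induction on $m$ from the triangle inequality for $\GH$ in Lemma~\ref{basic}. For $m=1$ there is nothing to prove. For the inductive step, assume $\GH(Y_0,Y_{m-1})\le\sum_{j=1}^{m-1}\GH(Y_{j-1},Y_j)$. Then
\[
 \GH(Y_0,Y_m)\le\GH(Y_0,Y_{m-1})+\GH(Y_{m-1},Y_m)\le\sum_{j=1}^{m}\GH(Y_{j-1},Y_j).
\]
The right-hand side is a finite sum of finite terms, so it is finite. The only point needing care is that the triangle inequality must hold for arbitrary, possibly noncompact or unbounded, spaces with the value $\infty$ allowed. Lemma~\ref{basic} states it in exactly this generality. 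One can also see it directly: compose correspondences $R\subseteq X\times Y$ and $S\subseteq Y\times Z$ into $S\circ R$, which is again a correspondence, and note that $\dis(S\circ R)\le\dis R+\dis S$. Taking infima then gives the inequality in $[0,\infty]$.

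For the \emph{in particular} clause, let $N$ be a connected network of finite length, let $a$ be any vertex, and let $i\in I$ be any terminal. Connectivity gives a path $a=v_0,v_1,\ldots,v_m=i$ in the graph of $N$. Each edge distance $\GH(Y_{v_{j-1}},Y_{v_j})$ is one summand of $\len N<\infty$, so it is finite. The first part then gives $\GH(Y_a,X_i)\le\len N<\infty$, which means $Y_a\in\cC_{X_i}$ by Definition~\ref{clouddef}.

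Because finite $\GH$ distance is an equivalence relation, all the clouds $\cC_{X_i}$ coincide, so every vertex of $N$ lies in one common cloud. No step here is a real obstacle; the only thing to check is the validity of the triangle inequality with infinite values allowed, as discussed above.
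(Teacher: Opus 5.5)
Your proof is correct and matches the paper's, which simply applies the triangle inequality of Lemma~\ref{basic} successively along the path, exactly as your induction does. One small point: for the bound $\GH(Y_a,X_i)\le\len N$, take the path simple so that its edges are distinct (finiteness alone needs no such care).
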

\begin{proof}
Apply the triangle inequality of Lemma~\ref{basic} successively along the path.
\end{proof}

All nonempty bounded metric spaces lie in one cloud: if $X,Y$ are bounded, the full relation $X\times Y$ is a correspondence with $\dis(X\times Y)\le\max\{\diam X,\diam Y\}$, whence $\GH(X,Y)\le\tfrac12\max\{\diam X,\diam Y\}<\infty$. Conversely, by Lemma~\ref{basic}, a space at finite $\GH$ distance from a bounded space is itself bounded, so that cloud consists exactly of the nonempty bounded metric spaces. 

\subsection{Proper and isometrically cobounded spaces}\label{propercobounded}

\begin{definition}[Properness]\label{proper}
A metric space $(X,d)$ is \emph{proper} if every closed ball $\overline B(x,r)$ is compact.
\end{definition}

\begin{definition}[Isometric coboundedness]\label{cobounded}
A metric space $(X,d)$ is \emph{isometrically cobounded} if there exist $x_0\in X$ and $C\ge0$ such that for every $y\in X$ there is a surjective isometry $g:X\to X$ with
\[
 d\bigl(g(y),x_0\bigr)\le C.
\]
Equivalently, the isometry group of $X$ has a bounded orbit space: every point can be moved into the fixed ball $\overline B(x_0,C)$.
\end{definition}

\begin{example}\label{examples-cobounded}
The spaces $\R$, $\R^n$ and $\mathbb Z^n$, and more generally every proper normed group, are proper and isometrically cobounded: translations move a chosen point to the origin. If $K$ is compact, then $\R^n\times K$ with the Manhattan  distance has the same two properties. Every nonempty compact metric space is proper and isometrically cobounded: fix $x_0$ and take $C=\diam X$ with the identity isometry. Every normed additive group is isometrically cobounded, but an infinite-dimensional normed space needs not be proper, so the two hypotheses must not be conflated.
\end{example}

These two properties play complementary roles: properness supplies compact balls in which bounded sequences have ultrafilter limits, while coboundedness supplies isometries that return an escaping base sequence to a fixed bounded region. Their meaning is made precise in Theorem~\ref{terminalident} and Remark~\ref{whyhyp}. The fact that a bounded proper metric space is compact, and the elementary properties of closed balls, are standard; see~\cite[Chapter~1 and Section~1.6]{BBI,TuzhilinLectures}.

\subsection{Ultrafilters and pointed ultralimits}

\begin{definition}[Nonprincipal ultrafilter]
A \emph{filter} on $\N$ is a family $\U$ of subsets of $\N$, regarded as large sets, that contains $\N$, excludes $\varnothing$, is closed under finite intersections, and is upward closed. It is an \emph{ultrafilter} if for every $A\subseteq\N$ exactly one of $A$ and $\N\setminus A$ belongs to $\U$. It is \emph{nonprincipal} if it contains no finite set, equivalently if it contains every cofinite set. We fix one nonprincipal ultrafilter~$\U$.
\end{definition}

A \emph{filter} on a set $J$ is a family $\mathcal F\subseteq\mathcal P(J)$ such that $J\in\mathcal F$, $\varnothing\notin\mathcal F$, $A,B\in\mathcal F$ implies $A\cap B\in\mathcal F$, and $A\in\mathcal F$, $A\subseteq B$ implies $B\in\mathcal F$. A filter is \emph{proper} when it does not contain $\varnothing$. An \emph{ultrafilter} is a filter $\mathcal U$ such that, for every $A\subseteq J$, exactly one of $A$ and $J\setminus A$ belongs to $\mathcal U$. It is \emph{principal} if it consists of all sets containing some fixed point $j_0\in J$; otherwise it is nonprincipal. On $\N$, a nonprincipal ultrafilter contains every cofinite set and no finite set.

The \emph{ultrafilter lemma} states that every proper filter is contained in an ultrafilter. Applying it to the cofinite filter on $\N$ yields the nonprincipal ultrafilter $\U$ fixed above. If $f:J\to Y$ is a map and $\mathcal U$ is an ultrafilter on $J$, the \emph{ultrafilter limit} of $f$ is a point $y\in Y$ such that, for every neighborhood $V$ of $y$,
\[
 f^{-1}(V)\in\mathcal U.
\]
Such a limit is unique whenever $Y$ is Hausdorff, and it exists for every map into a compact Hausdorff space. In particular, every bounded real sequence $(r_k)$ has a unique ultrafilter limit. We denote it by~$\ulim r_k.$

Thus, by definition, $\lim_{\mathcal U} r_k=L$ means that for every neighborhood $V$ of $L$ in $\R$, the set $\{k\in\N:r_k\in V\}$ belongs to $\mathcal U$. We will use the usual functoriality of ultralimits under continuous maps, as well as the compactness principle just stated; see~\cite[Chapter~I.5]{BH} for futher details.

\begin{definition}[Pointed ultralimit]\label{pulimdef}
Let $(Y_k,d_k,o_k)$ be pointed metric spaces. Define the bounded pointed product
\[
 \operatorname{PUProd}(Y_k,o_k)=\Bigl\{z=(z_k)_{k\in\N}\in\prod_{k\in\N}Y_k:\ \sup_k d_k(z_k,o_k)<\infty\Bigr\}.
\]
For $z=(z_k)_k$ and $z'=(z'_k)_k$ in this product put
\[
 \rho(z,z')=\ulim d_k(z_k,z'_k).
\]
The coordinate distances are bounded because both sequences stay uniformly close to $(o_k)_k$, and $\rho$ is a pseudometric. The \emph{pointed ultralimit}
\[
 \operatorname{PULim}_{\U}(Y_k,o_k)
\]
is the metric separation quotient by the relation $\rho(z,z')=0$. The class of $z$ is written $[z]$.
\end{definition}

\begin{lemma}[Correspondences pass to pointed ultralimits]\label{pucorr}
For every $k\in\N$ let $(Y_k,o_k)$ and $(Z_k,p_k)$ be pointed metric spaces, and let $R_k\subseteq Y_k\times Z_k$ be a correspondence with $(o_k,p_k)\in R_k$. Suppose
\[
 \dis R_k\le c_k,\qquad \sup_k c_k<\infty,\qquad c=\ulim c_k.
\]
Then
\[
 \GH\Bigl(\operatorname{PULim}_{\U}(Y_k,o_k),\operatorname{PULim}_{\U}(Z_k,p_k)\Bigr)\le\frac c2.
\]
\end{lemma}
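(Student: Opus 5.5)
We define an explicit correspondence between the two pointed ultralimits, induced coordinatewise by the $R_k$, and bound its distortion by $c$. Then the definition $\GH=\tfrac12\inf\dis$ gives the claim directly.

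Write $Y_\omega=\operatorname{PULim}_{\U}(Y_k,o_k)$ and $Z_\omega=\operatorname{PULim}_{\U}(Z_k,p_k)$, and put $\bar c=\sup_k c_k<\infty$. Let
\[
 R_\omega=\Bigl\{\bigl([y],[z]\bigr):\ y=(y_k)_k\in\operatorname{PUProd}(Y_k,o_k),\ z=(z_k)_k\in\operatorname{PUProd}(Z_k,p_k),\ (y_k,z_k)\in R_k\ \text{for all }k\Bigr\}.
\]

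\textbf{Surjectivity of the projections.} Given $[y]$ with $\sup_k d_k(y_k,o_k)\le r$, choose for each $k$ some $z_k\in Z_k$ with $(y_k,z_k)\in R_k$; this is possible because $R_k$ is a correspondence. Since $(o_k,p_k)\in R_k$,
\[
 d_{Z_k}(z_k,p_k)\le d_{Y_k}(y_k,o_k)+\dis R_k\le r+\bar c,
\]
so $(z_k)_k$ lies in the bounded pointed product and $[y]$ is hit. The other projection is handled symmetrically. This is the only place where the hypothesis that the $R_k$ match the base points, together with the uniform bound $\sup_k c_k<\infty$, is used. I expect it to be the main point needing care: without the uniform bound, partners of bounded sequences could escape to infinity.

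\textbf{Distortion bound.} Take pairs $([y],[z])$ and $([y'],[z'])$ in $R_\omega$, with representatives related coordinatewise as above. For every $k$,
\[
 \bigl|d_{Y_k}(y_k,y'_k)-d_{Z_k}(z_k,z'_k)\bigr|\le c_k.
\]
Taking ultralimits gives $|\rho(y,y')-\rho(z,z')|\le\ulim c_k=c$. This uses that ultralimits of bounded real sequences preserve non-strict inequalities and commute with the continuous operations of subtraction and absolute value. The distance in the ultralimit does not depend on the chosen representatives, so $\dis R_\omega\le c$.

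Finally, the definition of $\GH$ as half the infimal distortion gives $\GH(Y_\omega,Z_\omega)\le\tfrac12\dis R_\omega\le c/2$. Well-definedness of $R_\omega$ on classes raises no issue, because $R_\omega$ is defined as a set of pairs of classes admitting some representatives related by the $R_k$.
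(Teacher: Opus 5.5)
Your proposal is correct and follows the paper's proof essentially step for step. It uses the same coordinatewise-induced correspondence $R_\omega$, gets surjectivity from $(o_k,p_k)\in R_k$ together with the uniform distortion bound, and obtains $\dis R_\omega\le c$ by passing the coordinatewise inequality $\bigl|d_{Y_k}(y_k,y'_k)-d_{Z_k}(z_k,z'_k)\bigr|\le c_k$ to the ultralimit.
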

\begin{proof}
Write $Y_\omega$ and $Z_\omega$ for the two pointed ultralimits and define $R_\omega\subseteq Y_\omega\times Z_\omega$ by
\[
 ([y],[z])\in R_\omega\iff
 \begin{aligned}
 &\text{there are representatives }y=(y_k)_k\text{ and }z=(z_k)_k\\
 &\text{with }(y_k,z_k)\in R_k\text{ for every }k.
 \end{aligned}
\]
We first prove surjectivity onto $Y_\omega$. Given an admissible $y=(y_k)_k$, choose $z_k\in Z_k$ with $(y_k,z_k)\in R_k$. Since $(o_k,p_k)\in R_k$,
\[
 d_{Z_k}(z_k,p_k)\le d_{Y_k}(y_k,o_k)+c_k,
\]
whose right-hand side is uniformly bounded, so $z=(z_k)_k$ is admissible and $([y],[z])\in R_\omega$. The same argument with the two factors reversed proves surjectivity onto $Z_\omega$.

If $([y],[z]),([y'],[z'])\in R_\omega$, choose coordinatewise related representatives. Then
\[
 \bigl|d_{Y_k}(y_k,y'_k)-d_{Z_k}(z_k,z'_k)\bigr|\le c_k.
\]
Taking ultrafilter limits and using continuity of the absolute value gives
\[
 \bigl|d_{Y_\omega}([y],[y'])-d_{Z_\omega}([z],[z'])\bigr|\le c.
\]
Hence $\dis R_\omega\le c$, and the correspondence formula for $\GH$ proves the claim. The same estimate shows that the relation is unaffected by changing representatives by zero pseudodistance.
\end{proof}

\begin{theorem}[Terminal identification]\label{terminalident}
Let $(X,d)$ be proper and isometrically cobounded. For any sequence of base points $o_k\in X$,
\[
 \GH\bigl(\operatorname{PULim}_{\U}(X,o_k),X\bigr)=0.
\]
In fact the pointed ultralimit is isometric to $X$.
\end{theorem}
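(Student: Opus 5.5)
The plan is to reduce to a constant base point using the isometries supplied by coboundedness, and then to identify the pointed ultralimit with a constant base point with $X$ itself using properness.

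\emph{Step 1 (moving base points).} By Definition~\ref{cobounded}, for each $k$ there is a surjective isometry $g_k$ of $X$ with $d(g_k(o_k),x_0)\le C$. Put $p_k=g_k(o_k)$. The graph of $g_k$ is a correspondence $R_k$ between $(X,o_k)$ and $(X,p_k)$ with $(o_k,p_k)\in R_k$ and $\dis R_k=0$. The construction in the proof of Lemma~\ref{pucorr} then yields a distortion-zero correspondence $R_\omega$ between $\operatorname{PULim}_{\U}(X,o_k)$ and $\operatorname{PULim}_{\U}(X,p_k)$. Because every $g_k$ is a bijection, $R_\omega$ is in fact the graph of a map $[z]\mapsto[(g_k(z_k))_k]$. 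This map is well defined and distance preserving, and it is surjective because $g_k^{-1}$ gives the inverse. Hence the two ultralimits are isometric.

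\emph{Step 2 (changing to a constant base point).} Since $d(p_k,x_0)\le C$ uniformly, a sequence is bounded relative to $(p_k)$ if and only if it is bounded relative to the constant sequence $(x_0)$. The two bounded pointed products therefore coincide as sets, and they carry the same pseudometric $\rho$. So $\operatorname{PULim}_{\U}(X,p_k)=\operatorname{PULim}_{\U}(X,x_0)$.

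\emph{Step 3 (identifying with $X$).} Define $\iota:X\to\operatorname{PULim}_{\U}(X,x_0)$ by $\iota(x)=[(x,x,\dots)]$. This map is clearly distance preserving. For surjectivity, take an admissible sequence $(z_k)$ with $d(z_k,x_0)\le r$ for all $k$. All $z_k$ lie in $\overline B(x_0,r)$, which is compact by properness (Definition~\ref{proper}). Hence the map $k\mapsto z_k$ has an ultrafilter limit $z\in\overline B(x_0,r)$. Continuity of the distance gives $\rho((z_k),\iota(z))=\ulim d(z_k,z)=0$, so $[(z_k)]=\iota(z)$. Composing the isometries from Steps 1--3 gives an isometry $\operatorname{PULim}_{\U}(X,o_k)\cong X$. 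In particular the $\GH$ distance is $0$.

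The main obstacle is Step 3, specifically the existence of the ultralimit inside the ball. This is exactly where properness enters, and where the escaping-base-point issue has already been neutralized by Steps 1--2. The remaining points are routine: checking that the map in Step 1 respects the quotient by $\rho=0$ (it does, because each $g_k$ preserves distances coordinatewise), and checking that the ultralimit in Step 3 exists in the compact Hausdorff ball, using the compactness principle recalled before Definition~\ref{pulimdef}.
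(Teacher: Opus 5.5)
Your proposal is correct and is essentially the paper's argument. The paper defines the isometry $F([z])=\ulim g_k(z_k)$ directly into $X$, using the compact ball $\overline B(x_0,M_z+C)$ for existence of the limit and the sequence $z_k=g_k^{-1}(x)$ for surjectivity; this is exactly the composite of your Steps 1--3 (with $\iota$ inverted), and your factorization through $\operatorname{PULim}_{\U}(X,x_0)$ only separates the recentring by coboundedness from the use of properness.
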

\begin{proof}
Choose $x_0\in X$ and $C\ge0$ as in Definition~\ref{cobounded}. For each $k$ choose a surjective isometry $g_k:X\to X$ with $d(g_k(o_k),x_0)\le C$. Let $X_\omega=\operatorname{PULim}_{\U}(X,o_k)$.

Take an admissible sequence $z=(z_k)_k$, say $\sup_k d(z_k,o_k)\le M_z$. Then
\[
 d\bigl(g_k(z_k),x_0\bigr)\le d\bigl(g_k(z_k),g_k(o_k)\bigr)+d\bigl(g_k(o_k),x_0\bigr)\le M_z+C,
\]
so all $g_k(z_k)$ lie in the compact ball $\overline B(x_0,M_z+C)$ by properness. Their $\U$-limit exists; denote it by $F([z])\in X$.

The map is well defined: if $[z]=[z']$ in $X_\omega$, then $\ulim d(g_k(z_k),g_k(z'_k))=\ulim d(z_k,z'_k)=0$, so the two compact-space ultralimits coincide. For arbitrary admissible $z,z'$, continuity of the distance function on the compact ball containing both transformed sequences gives
\[
 d\bigl(F([z]),F([z'])\bigr)=\ulim d\bigl(g_k(z_k),g_k(z'_k)\bigr)=\ulim d(z_k,z'_k)=d_{X_\omega}([z],[z']).
\]
Thus $F:X_\omega\to X$ is an isometric embedding.

It is surjective: given $x\in X$, put $z_k=g_k^{-1}(x)$. Then
\[
 d(z_k,o_k)=d\bigl(x,g_k(o_k)\bigr)\le d(x,x_0)+C,
\]
so $(z_k)_k$ is admissible and the transformed sequence $g_k(z_k)$ is constantly $x$; hence $F([z])=x$. Therefore $F$ is an isometry, which implies zero $\GH$ distance.
\end{proof}

\begin{remark}[Why both hypotheses matter]\label{whyhyp}
Properness supplies compact balls in which the transformed bounded sequences have ultrafilter limits. Coboundedness supplies the isometries $g_k$ that move an arbitrary escaping base sequence back into one fixed bounded region. Without properness a bounded ultrapower can be larger than the original space; without coboundedness the pointed limit can be a genuine ``view from infinity''.
\end{remark}

\subsection{Minimization on a fixed tree in a cloud}

Fix a cloud $\cC$, a finite tree $H$ with vertex set $I\sqcup V$, $I\ne\varnothing$, and proper isometrically cobounded terminals $X_i\in\cC$, $i\in I$. A \emph{realization} of $H$ in $\cC$ assigns a space $Y_a\in\cC$ to every Steiner vertex $a\in V$ and fixes $Y_i=X_i$ at the terminals.

\begin{proposition}[Minimum on a fixed tree]\label{fixedtree}
The finite real-valued length functional on the realizations of $H$ in $\cC$ attains its infimum.
\end{proposition}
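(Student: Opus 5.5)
Let $\lambda$ be the infimum of the length over realizations of $H$ in $\cC$. First, $\lambda<\infty$: put at every Steiner vertex a copy of a fixed terminal $X_{i_0}$. All terminals lie in $\cC$, so every edge has finite $\GH$ length. Choose realizations $Y^k=(Y^k_a)_a$ with $\len Y^k<\lambda+1/k$, all of length at most $C=\lambda+1$. For each edge $e=\{a,b\}$ and each $k$, choose by Lemma~\ref{basic} a correspondence $R^k_e\subseteq Y^k_a\times Y^k_b$ with
\[
 \dis R^k_e\le 2\GH(Y^k_a,Y^k_b)+2/k .
\]
These distortions are uniformly bounded by $2C+2$.

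The central step is to choose \emph{coherently transported base points}. Root $H$ at a terminal $i_0$ and fix $x_0\in X_{i_0}$. Set $o^k_{i_0}=x_0$ for all $k$. Going down the tree, for each child $b$ of $a$ pick $o^k_b\in Y^k_b$ with $(o^k_a,o^k_b)\in R^k_{\{a,b\}}$. This gives base points $o^k_a$ at every vertex such that every edge correspondence contains the pair of base points. At terminals other than $i_0$ the base points $o^k_i\in X_i$ may escape to infinity, but this is harmless.

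Define the limit realization $K_a=\operatorname{PULim}_{\U}(Y^k_a,o^k_a)$. At a terminal $Y^k_i=X_i$, so $K_i=\operatorname{PULim}_{\U}(X_i,o^k_i)$. By Theorem~\ref{terminalident}, using properness and isometric coboundedness, $K_i$ is isometric to $X_i$, and we may restore $K_i=X_i$ exactly. For each edge, Lemma~\ref{pucorr} applies, since $(o^k_a,o^k_b)\in R^k_e$ and the distortions are uniformly bounded. It gives
\[
 \GH(K_a,K_b)\le\ulim\Bigl(\GH(Y^k_a,Y^k_b)+1/k\Bigr).
\]
Summing over the finitely many edges and using additivity of ultralimits of bounded sequences,
\[
 \len K\le\ulim\bigl(\len Y^k+|E(H)|/k\bigr)=\lambda .
\]

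It remains to check that $K$ is a realization in $\cC$, namely that each $K_a$ lies in $\cC$. Every $K_a$ is a nonempty metric space, since it contains the class of $(o^k_a)_k$. The path in $H$ from $a$ to a terminal has finitely many edges, each of finite $\GH$ length by the estimate above. Lemma~\ref{cloudpath} then gives $\GH(K_a,X_i)<\infty$, so $K_a\in\cC$. Hence $K$ is a competitor, the definition of $\lambda$ gives $\len K\ge\lambda$, and the infimum is attained.

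The main obstacle I expect is the coherence of base points. Lemma~\ref{pucorr} needs the base-point pair inside each edge correspondence, and the terminal identification needs arbitrary base sequences in $X_i$. The tree structure resolves both: transporting from the root along parent-to-child edges never creates a conflicting constraint. A secondary point to check is that the nonseparable, non-proper nature of the Steiner limits $K_a$ causes no trouble. Cloud membership only uses finite $\GH$ distance, so this should be fine.
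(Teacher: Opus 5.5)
Your proposal is correct and follows essentially the same route as the paper: a minimizing sequence, near-optimal edge correspondences, base points transported coherently from a root terminal, pointed ultralimits at every vertex, Lemma~\ref{pucorr} for the edge bounds, Theorem~\ref{terminalident} to restore the terminals, and Lemma~\ref{cloudpath} for cloud membership. The only differences are cosmetic: you index the error terms by $1/k$ where the paper uses $1/(k+1)$, and you remark explicitly that each $K_a$ is nonempty because it contains the class of $(o^k_a)_k$.
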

\begin{proof}
There is a realization in $\cC$: choose a root terminal $i_0\in I$ and put $Y_a=X_{i_0}$ at every Steiner vertex. As all terminals lie in one cloud, every edge length is finite, so the infimum $\lambda$ is a finite nonnegative real number. Choose a minimizing sequence of realizations $N_k=(H,(Y_a^k)_a)$ with
\begin{equation}\label{eq:minseq}
 \lambda\le\len N_k\le\lambda+\frac1{k+1}.
\end{equation}
For an edge $e=\{a,b\}$ set $g_{k,e}=\GH(Y_a^k,Y_b^k)$; by~\eqref{eq:minseq}, $0\le g_{k,e}\le\lambda+1$. Choose a correspondence $R_{k,e}\subseteq Y_a^k\times Y_b^k$ with
\begin{equation}\label{eq:edgecorr}
 \dis R_{k,e}\le2g_{k,e}+\frac1{k+1},
\end{equation}
and use the inverse relation for the opposite orientation.

\paragraph{Coherent base points.} Choose a root terminal $i_0\in I$ and a point $x_0\in X_{i_0}$, and root the tree at $i_0$. At the root put $o_{i_0}^k=x_0$. Recursively, if $b$ is a child of $a$, use surjectivity of $R_{k,\{a,b\}}$ to choose $o_b^k\in Y_b^k$ with $(o_a^k,o_b^k)\in R_{k,\{a,b\}}$. Thus the base points at the two ends of every edge are related by its correspondence, and $o_a^k$ is obtained by transporting $x_0$ along the unique root path from $i_0$ to $a$; see Figure~\ref{fig:roottransport}.

\begin{figure}[htbp]
\centering
\begin{tikzpicture}[font=\small]
 \node[terminal] (r) at (0,0) {$i_0:X_{i_0}$};
 \node[vertex] (v1) at (2.6,0) {$a_1$};
 \node[vertex] (v2) at (5.2,0) {$a_2$};
 \node[vertex] (a) at (7.8,0) {$a$};
 \node[vertex] (off) at (5.2,1.6) {$b$};
 \draw (r)--node[below] {$e_1$} (v1);
 \draw (v1)--node[below] {$e_2$} (v2);
 \draw (v2)--node[below] {$e_m$} (a);
 \draw (v2)--(off);
 \node[point,label=below:{$x_0=o_{i_0}^k$}] (p0) at (0,-1.4) {};
 \node[point,label=below:{$o_{a_1}^k$}] (p1) at (2.6,-1.4) {};
 \node[point,label=below:{$o_{a_2}^k$}] (p2) at (5.2,-1.4) {};
 \node[point,label=below:{$o_a^k$}] (pa) at (7.8,-1.4) {};
 \draw[transport] (p0)--(p1);
 \draw[transport] (p1)--(p2);
 \draw[transport] (p2)--(pa);
\end{tikzpicture}
\caption{Coherent base-point transport along the root path.}
\label{fig:roottransport}
\end{figure}

\paragraph{Vertex limits.} For every vertex $a\in I\sqcup V$ define
\[
 Z_a=\operatorname{PULim}_{\U}\bigl(Y_a^k,o_a^k\bigr).
\]
For every edge $e=\{a,b\}$ the numbers $g_{k,e}$ lie in the compact interval $[0,\lambda+1]$, so $\gamma_e=\ulim g_{k,e}$ exists. Apply Lemma~\ref{pucorr} to $R_{k,e}$ and~\eqref{eq:edgecorr}: the base points are related, the distortions are uniformly bounded, and their ultrafilter limit is $2\gamma_e$. Hence
\begin{equation}\label{eq:limit-edge}
 \GH(Z_a,Z_b)\le\gamma_e.
\end{equation}
At a terminal $i\in I$ the coordinate space is constantly $X_i$, though its base point $o_i^k$ may vary; Theorem~\ref{terminalident} gives an isometry $Z_i\cong X_i$. Replace $Z_i$ by the prescribed copy $X_i$; this does not change any incident $\GH$ distance.

For every vertex $a$, follow the finite path in $H$ from $a$ to a terminal. Inequality~\eqref{eq:limit-edge} gives finite $\GH$ distance along every edge, so Lemma~\ref{cloudpath} shows $Z_a\in\cC$. Hence the limit assignment is an admissible realization in the fixed cloud.

The resulting realization $N_\omega$ of $H$ has length
\[
 \len N_\omega\le\sum_{e\in E(H)}\gamma_e=\ulim\sum_{e\in E(H)}g_{k,e}=\ulim\len N_k=\lambda.
\]
The middle equality uses that $E(H)$ is finite, so ultrafilter limits commute with this finite sum. Since $\lambda$ is the infimum, the reverse inequality $\lambda\le\len N_\omega$ is automatic, so $\len N_\omega=\lambda$.
\end{proof}

\subsection{Existence in one cloud}

\begin{theorem}[Steiner minimal tree in one cloud]\label{maincloud}
Fix a Gromov--Hausdorff cloud $\cC$. Every nonempty finite family of proper, isometrically cobounded spaces $X_i\in\cC$, $i\in I$, admits a finite-length Steiner minimal tree in $\cC$.
\end{theorem}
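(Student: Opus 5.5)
The plan mirrors the proof of Theorem~\ref{main}, with Proposition~\ref{fixedtree} in place of Proposition~\ref{fixed}.

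\textbf{Trivial case.} If $|I|=1$, the single terminal with no edges is a tree of length zero. It lies in $\cC$ and is therefore minimal.

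\textbf{Finitely many shapes.} Assume $n=|I|\ge2$. By Lemma~\ref{reduction}, only labelled trees on $I\sqcup\{1,\ldots,m\}$ with $m\le n-2$ need be considered, and there are finitely many of them. For each such shape $H$, Proposition~\ref{fixedtree} gives a realization $N_H$ in $\cC$ attaining the infimum $\lambda_H$. This infimum is finite because all terminals lie in the single cloud $\cC$. Let $N_*$ be a realization attaining the least of these finitely many values. Then $N_*$ is a finite-length tree network all of whose vertices lie in $\cC$.

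\textbf{Comparison with competitors.} Let $N$ be any connected competitor whose vertices all lie in $\cC$.
\begin{enumerate}[label=(\roman*)]
\item If $\len N=\infty$, there is nothing to prove.
\item Otherwise, apply the reduction of Lemma~\ref{reduction} to $N$: pass to a spanning tree, delete nonterminal leaves, and suppress nonterminal vertices of degree two.
\end{enumerate}

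The point requiring care is that the reduced tree must again be a realization in $\cC$. This holds because every reduction operation only deletes vertices or edges; it never introduces a new vertex space. The surviving Steiner vertices therefore keep their original spaces, which lie in $\cC$, and the terminals are unchanged.

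The reduction does not increase length. Suppressing a degree-two vertex is justified by the triangle inequality, which needs only Lemma~\ref{basic}. After relabelling its Steiner vertices, the reduced tree is a realization of one of the finitely many shapes $H$. Its length is therefore at least $\lambda_H\ge\len N_*$.

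\textbf{Membership in $\cC$.} The spaces used by Proposition~\ref{fixedtree}, namely the pointed ultralimits $Z_a$, were already shown there to lie in $\cC$ via Lemma~\ref{cloudpath}. No further verification is needed.

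\textbf{Main obstacle.} The only genuine subtlety is the bookkeeping just described: the reduction must stay inside the admissible class of realizations in $\cC$, and zero-length or isometric-duplicate vertices must be allowed, as in the remark on degenerate families. All analytic difficulty has already been absorbed by Proposition~\ref{fixedtree}, through Lemma~\ref{pucorr} and Theorem~\ref{terminalident}.
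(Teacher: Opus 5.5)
Your proposal is correct and follows the paper's proof essentially step for step: reduce to finitely many labelled tree shapes via Lemma~\ref{reduction}, minimize on each shape with Proposition~\ref{fixedtree}, take the least attained value, and compare every competitor through its reduction. You also check explicitly that the reduction only deletes vertices and edges, so the reduced tree stays in $\cC$. Your bound of $n-2$ Steiner vertices is the right one; the paper writes $2|I|+1$, a harmless slip since any finite bound suffices.
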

\begin{proof}
If $|I|=1$ the single terminal is a tree of length zero. Assume $|I|\ge2$. By Lemma~\ref{reduction}, every connected competitor in $\cC$ can be replaced, without increasing its length, by a tree network in $\cC$ with at most $2|I|+1$ Steiner vertices, hence it suffices to consider finitely many labelled tree shapes. Proposition~\ref{fixedtree} supplies a minimizing realization in $\cC$ for every shape. Select a shape whose attained minimum is the least among these finitely many real numbers and call the resulting tree $N_*$. Any connected competitor reduces to one of the considered shapes, and its length is at least the minimum for that shape, which is at least $\len N_*$. Hence $N_*$ is a finite-length Steiner minimal tree in $\cC$.
\end{proof}

\end{document}